\documentclass[11pt]{article}

\usepackage{indentfirst,mathrsfs}
\usepackage{amsmath,amssymb,amsfonts,amsthm,mathtools}
\usepackage{array,tabularx,listings}
\usepackage{cite,url}
\usepackage[colorlinks=true,linkcolor=blue,citecolor=blue,urlcolor=blue]{hyperref}
\usepackage[capitalize,noabbrev]{cleveref}

\newtheorem{theorem}{Theorem}
\newtheorem{lemma}{Lemma}
\newtheorem{proposition}{Proposition}
\newtheorem{corollary}{Corollary}
\theoremstyle{definition}
\newtheorem{definition}{Definition}
\newtheorem{construction}{Construction}
\newtheorem{example}{Example}
\theoremstyle{remark}

\crefname{theorem}{Theorem}{Theorems}
\crefname{lemma}{Lemma}{Lemmas}
\crefname{proposition}{Proposition}{Propositions}
\crefname{corollary}{Corollary}{Corollaries}
\crefname{definition}{Definition}{Definitions}
\crefname{construction}{Construction}{Constructions}
\crefname{example}{Example}{Examples}
\crefname{remark}{Remark}{Remarks}
\crefname{section}{Section}{Sections}
\crefname{table}{Table}{Tables}

\newcommand{\F}{\mathbb{F}}
\newcommand{\Z}{\mathbb{Z}}
\newcommand{\DDT}{\operatorname{DDT}}
\newcommand{\du}{\delta}
\newcommand{\PGL}{\operatorname{PGL}}
\newcommand{\PG}{\operatorname{PG}}
\newcommand{\ord}{\operatorname{ord}}
\newcommand{\cA}{\mathcal{A}}

\begin{document}

\title{Three Infinite Classes of APN Permutations on $\Z_n$}

\author{Deng Tang\\
\small Shanghai Jiao Tong University, Shanghai, 200240, China\\dtang@foxmail.com}
\date{August 17, 2026}

\maketitle

\begin{abstract}
For any permutation of a nontrivial finite abelian group, the differential uniformity is at least two; a permutation
achieving this lower bound is called almost perfect nonlinear (APN).
In this paper, we present three infinite classes of APN permutations on the cyclic
group $\Z_n$.
First, using the orbit of a Singer cycle to label the points of the projective line cyclically, we present a class of APN permutations on $\Z_{q+1}$, where $q$ is an arbitrary odd prime power.
Each projectivity $M\in\PGL_2(q)$ that does not centralize the unique
involution $J$ of the corresponding Singer group induces such an APN
permutation.  For a fixed generator $R$ of a Singer group and a fixed base point, we obtain exactly $q(q^2-1)-2(q+1)$ projectivities satisfying this condition, and they induce distinct APN permutations.
Second, using binomials that induce permutations of the points of projective spaces over finite fields, we present a class of APN permutations on $\Z_{(3^d-1)/2}$. More generally, if a binomial $H(x)=ux^r+vx^t$ induces a permutation of the points of
the projective space $\PG(d-1,q)$ and satisfies $\gcd(r-t,q^d-1)=q-1$, then the corresponding permutation has differential uniformity at most $q-1$. For $q=3$, this immediately gives the bound $q-1=2$, and invertible ternary linearized polynomials provide an explicit infinite APN subfamily.
Third, we combine the completed reciprocal with parity and the quadratic character to present a class of APN permutations on $\Z_{2p}$, where by the completed reciprocal we mean the permutation $\iota$ of $\F_p$ defined by $\iota(0)=0$ and $\iota(x)=x^{-1}$ for $x\ne0$.
For each prime $p\ge11$, this construction defines a parametrized family $\mathcal B_{a,c,p}$, where $a\in\F_p^*$ and $c\in\F_p$. We prove that $\mathcal B_{a,c,p}$ is APN exactly when $\chi(a)=\chi(-3)=\chi(1-4a)=\chi(1-4/a)=-1$,
where $\chi$ denotes the quadratic character of $\F_p$.
For every prime $p>5$ with $p\equiv5\pmod6$, the family contains exactly $p(p+1+T_p)/8$ distinct APN permutations, where $T_p=\sum_{z\in\F_p}\chi(z(z-4)(1-4z))$ and $|T_p|\le2\sqrt p$.
Each construction also gives an infinite subfamily whose domain orders fall outside the standard order forms $r-1$, $r-2$, $r-3$, and $r-4$, where $r$ is a prime power.
Here $r-1$ is the order form of the Welch--Costas and Panario--Sakzad--Stevens--Wang classes [IEEE TIT 57(11): 7648--7657, 2011], while $r-2$, $r-3$, and $r-4$ are the order forms of the Golomb $G_2$, $G_3$, and $G_4$ classes, respectively.
To the best of our knowledge, these are the first infinite classes reported since the Panario--Sakzad--Stevens--Wang class was introduced in 2011 that provide APN permutations on $\Z_n$ for infinitely many composite orders $n$ outside the standard forms $r-1$, $r-2$, $r-3$, and $r-4$, where $r$ is a prime power.

\end{abstract}

\noindent\textbf{Keywords:} APN permutation; cyclic group; projective geometry; Singer cycle; quadratic character.

\section{Introduction}\label{S:Introduction}

Let $G$ be a nontrivial finite abelian group written additively, and let $F$ be a permutation of $G$. The differential uniformity of $F$ is the maximum number of
solutions $x\in G$ to an equation $F(x+a)-F(x)=b$,
where $a$ ranges over $G\setminus\{0\}$ and $b$ ranges over $G$ (see \cref{D:APN} for details). A perfect nonlinear (PN) function has differential
uniformity one. For any fixed nonzero input difference $a$, the injectivity of $F$ implies that $F(x+a)-F(x)\ne 0$ for every $x\in G$.
Thus, the $|G|$ resulting output differences take values among only the $|G|-1$ nonzero elements of $G$.
The pigeonhole principle therefore implies that some output difference must occur at least twice. Consequently,
every permutation of $G$ has differential uniformity at least two, and hence no PN permutation exists
on $G$~\cite{DrakakisGowMcGuireDAM2009}. A permutation achieving this lower bound is called almost perfect nonlinear (APN).
Differential uniformity is a classical cryptographic criterion, and its
definition on finite abelian groups generalizes the usual definition on
vector spaces over finite fields~\cite{NybergEC1994}.

In this paper, we write $\Z_n=\Z/n\Z$ and consider its cyclic additive group; its ring structure is used only when multiplication or polynomial representations are
involved.  The truth table of a permutation may be regarded as the cyclic
sequence $[F(0),F(1),\ldots,F(n-1)]$.  For every nonzero cyclic shift, the APN
property requires each entry in the difference of the shifted and unshifted
sequences to occur at most twice (see \cref{D:APN}).  The same condition
occurs in the study of Costas permutations, distinct-difference
configurations, and Sidon-type sets.  More precisely, in the graph of $F$ in
$\Z_n\times\Z_n$, every difference with nonzero first coordinate has at most
two ordered representations; the multiplicity of $(u,v)$ is
$\DDT_F(u,v)$ for $u\ne0$.  Over elementary abelian $2$-groups, the
corresponding bound on every nonzero graph difference establishes the Sidon-set
characterization of an APN function~\cite{MihailaThornburghCC2026}.  Related
Sidon spaces also occur in cyclic orbit codes~\cite{RothRavivTamoTIT2018}.

Permutations of residue classes have appeared directly in symmetric
cryptography.  The two nonlinear byte substitutions of \textsf{SAFER K-64}
are obtained from exponentiation and discrete logarithms to the base $45$
modulo $257$, with the exceptional representatives completed so that the
functions permute the $256$ byte values~\cite{MasseyFSE1994}.  When the byte values
are identified with $\Z_{256}$, the exponentiation permutation is closely related to
the Welch--Costas APN permutation for the prime $257$
\cite{DrakakisGowMcGuireDAM2009,FreitasUFSC2019}.
Rivest characterized all integer-coefficient polynomials that induce permutations of $\Z_{2^w}$ and observed that
the \textsf{RC6} block cipher uses $x\mapsto x(2x+1)$ as a permutation of $w$-bit words~\cite{RivestFFA2001}.
 Klimov and Shamir constructed invertible transformations on
$n$-bit words by combining arithmetic modulo $2^n$ with Boolean operations
\cite{KlimovShamirCHES2002}.  Computational searches for APN permutations on
groups such as $\Z_2\times\Z_{2^k}$ were reported in
\cite{FreitasUFSC2019}.  These examples highlight two distinct requirements:
efficient invertibility is useful in a cryptographic implementation, whereas
low differential uniformity requires a separate analysis.

Fully homomorphic encryption (FHE), zero-knowledge (ZK) proofs, and secure
multiparty computation (MPC) have renewed interest in cryptographic primitives
expressed by ring arithmetic.  Arithmetic modulo $2^{32}$ or $2^{64}$ agrees
with the word operations supported by common processors.  Examples of
symmetric primitives designed for such arithmetic include \textsf{Elisabeth}
over $\Z_{16}$ and \textsf{Rubato} with composite moduli
\cite{CosseronHoffmannMeauxStandaertAC2022,HaKimLeeLeeSonEC2022}.  The
lattice-based pseudorandom number generator \textsf{LEAP} operates over the
quotient ring $\Z_q[X]/(X^d+1)$ and provides another example in which modular
ring arithmetic is part of a symmetric primitive
\cite{ZhangLuLiuYinWangTOSC2025}.  Mixing arithmetic and Boolean operations,
or using rings with zero divisors, can invalidate algebraic identities that
hold over finite fields.  This does not by itself imply security; it requires
cryptanalysis in the actual ambient ring
\cite{GrassiAyalaHovdOygardenRaddumWangC2023}.
Ring arithmetic is also used directly in proof systems and MPC.  Arithmetic
secret sharing over Galois rings gives MPC over $\Z_{p^k}$
\cite{AbspoelEtAlAC2020}, and dishonest-majority MPC over $\Z_{2^k}$ can be
constructed by working in Galois ring extensions
\cite{EscuderoXingYuanC2022}.  \textsf{Rinocchio} proves computations over
rings such as $\Z_{2^{64}}$~\cite{GaneshNitulescuSoriaVazquezJC2023}.
More recent work gave efficient ZK protocols over $\Z_{2^k}$ via Galois rings
\cite{LinXingYaoC2024} and polynomial commitments leading to publicly
verifiable SNARKs for $\Z_{2^k}$ circuits
\cite{JiaLiXingYaoYuanC2025}.  These results reduce the need to translate
word-level computations into arithmetic circuits over an unrelated field.

The NIST First Call for Multi-Party Threshold Schemes solicits distributed implementations in which a cryptographic operation is
performed while the private or secret key remains shared among several parties. Its scope includes signatures,
public-key and symmetric-key primitives, key generation, FHE, ZK proofs, and auxiliary components~\cite{BrandaoPeraltaNIST2026}.
Together with the availability of MPC, secret-sharing, and ZK protocols over residue class rings, this broad scope suggests that arithmetic
over $\Z_n$ could be investigated as a potential computational domain for future threshold-friendly symmetric components or auxiliary operations~\cite{AbspoelEtAlAC2020,EscuderoXingYuanC2022,LinXingYaoC2024,JiaLiXingYaoYuanC2025}. The APN permutations constructed in this paper provide combinatorial candidates for such further investigation.
The NIST standards ML-KEM and ML-DSA operate over polynomial quotient rings with coefficients in $\Z_q$, rather than over the cyclic additive group $\Z_n$~\cite{NISTFIPS2032024,NISTFIPS2042024}. Related work has developed lattice-based threshold signatures, distributed key generation, and threshold key-encapsulation mechanisms
in polynomial-ring settings~\cite{DelPinoEtAlEC2024,EspitauNiotPrestC2024,LapihaPrestAC2025}. These standards
and constructions illustrate the broader role of modular ring arithmetic, but they do not provide a direct application of
permutations on $\Z_n$. The present paper establishes combinatorial properties only; it proposes neither a cryptographic primitive
nor a threshold protocol and does not evaluate implementation or circuit costs.

To date, several infinite classes of  APN permutations on $\Z_n$ are known.  The
Welch--Costas class is obtained from exponentiation in the multiplicative
group of a finite field~\cite{GolombTaylorPIEEE1984}.  Drakakis, Gow, and
McGuire subsequently gave a common treatment of APN permutations on $\Z_n$,
including the Welch and Golomb classes
\cite{DrakakisGowMcGuireDAM2009}.  Panario, Sakzad, Stevens, and Wang then used
finite field permutations and discrete logarithms to obtain permutations with
optimum ambiguity~\cite{PanarioSakzadStevensWangTIT2011} (this class will be
called the Panario--Sakzad--Stevens--Wang class in the sequel).  In
particular, Panario et al. called the completed rational permutation in their
Theorem~15 a M\"obius function.  That theorem applies exponentiation and
discrete logarithms to this function.  Hence neither fractional linear transformations
nor the use of exponents and discrete logarithms to label finite field
elements is new.  The natural orders of these earlier constructions are inherited from a full
multiplicative group or from deleting a small number of field elements.
Related circular Costas maps and projective group actions were considered in
\cite{RubioTorresARXIV2022,FengLiZhouARXIV2020}.
In 2013, Yu and Wang proved that a permutation induced by a polynomial with integer coefficients over composite $\Z_n$ cannot be APN and
identified the construction of more general forms as an open problem~\cite{YuWangDAM2013}.
Other results
gave differential uniformity at most eight on
$\Z_{2p}$~\cite{MishraGuptaGaurFFA2023} and at most three on the prime ring
$\Z_p=\F_p$~\cite{GuptaMishraGaurDCC2025}, but did not provide new infinite
APN classes on cyclic groups of composite order.  Recent APN permutations over prime
fields~\cite{BudaghyanPalDCC2025} concern the field case, whereas the
classification over the Galois ring $\operatorname{GR}(4,2)$ concerns the
noncyclic additive group $\Z_4^2$~\cite{RonjonSandribCC2026}.
To the best of our knowledge, the classes presented in this paper are the
first infinite classes reported since the Panario--Sakzad--Stevens--Wang
class was introduced in 2011 that provide APN permutations on $\Z_n$ for
infinitely many composite orders $n$ outside the standard forms
$r-1,r-2,r-3,r-4$, where $r$ is a prime power.

In this paper, we construct three infinite classes of APN permutations on the
cyclic group $\Z_n$.  The main contributions are summarized as follows.
\begin{itemize}
\item The first construction, described in \cref{Con:singer}, uses the orbit of a Singer cycle to label the projective line cyclically and yields a class of APN permutations on $\Z_{q+1}$, where $q$ is an arbitrary odd prime power. By \cref{T:singer-apn}, each projectivity $M\in\PGL_2(q)$ that does not centralize the unique
involution $J$ of the corresponding Singer group induces such an APN
permutation. The proof reduces each difference equation to the fixed-point equation of a nonidentity M\"obius transformation. \Cref{P:singer-count} shows that, for a fixed Singer cycle and a fixed base point, exactly $q(q^2-1)-2(q+1)$ projectivities satisfy this condition and induce distinct APN permutations.

\item The second construction, given in \cref{Con:binomial}, starts with binomials that induce permutations of the points of projective spaces over finite fields and gives a class of APN permutations on $\Z_{(3^d-1)/2}$. More generally, \cref{T:exponent-condition} shows that if a binomial
$H(x)=u x^r+v x^t$
induces a permutation of the points of projective space $\PG(d-1,q)$ and satisfies
$\gcd(r-t,q^d-1)=q-1$, then the corresponding permutation has differential uniformity at most $q-1$. For $q=3$, this upper bound becomes two, so the induced permutation is APN. Invertible ternary linearized polynomials yield the explicit infinite APN subfamily given in \cref{T:ternary-apn}.

\item For an arbitrary prime $p\ge11$, the third construction in \cref{Con:reciprocal} combines the completed reciprocal with parity and the quadratic character to define a parametrized family $\mathcal B_{a,c,p}$ of functions on $\Z_{2p}$, where $a\in\F_p^*$ is a multiplier parameter and $c\in\F_p$ is a translation parameter. By \cref{T:reciprocal-apn}, the function $\mathcal B_{a,c,p}$ is an APN permutation if and only if
$\chi(a)=\chi(-3)=\chi(1-4a)=\chi(1-4/a)=-1$,
where $\chi$ denotes the quadratic character of $\F_p$. In particular, this condition is independent of $c$. \Cref{T:parameter-count,C:reciprocal-infinite} show that, for every prime $p>5$ satisfying $p\equiv5\pmod6$, the family contains exactly
$p(p+1+T_p)/8$
distinct APN permutations, where $T_p=\sum_{z\in\F_p}\chi\bigl(z(z-4)(1-4z)\bigr)$ and
$|T_p|\le2\sqrt p$.
\end{itemize}

Each construction also gives an infinite subfamily whose domain orders fall
outside the standard forms $r-1,r-2,r-3,r-4$, where $r$ is a prime power.
Here $r-1$ is the order form of the Welch--Costas and
Panario--Sakzad--Stevens--Wang classes, while $r-2,r-3,r-4$ are the order
forms of the Golomb $G_2,G_3,G_4$ classes, respectively.
Relabeling inputs or outputs and taking compositional inverses do not change
the domain order, so these orders distinguish the new permutations from those earlier
classes under these operations.  This comparison neither decides equivalence when two
constructions have the same order nor proves disjointness under a broader
notion of equivalence.
Table~\ref{Tab:intro-comparison} compares the order forms of the known classes
with those of the three classes presented in this paper.  Here $r$ and $q$
denote prime powers, and $p$ denotes a prime.  The additional hypotheses are
given in the cited results and in the subsequent sections.

\begin{table}[htbp]
\centering
\caption{Known and new infinite classes of APN permutations on $\Z_n$}
\label{Tab:intro-comparison}
\footnotesize
\begin{tabular}{|c|c|c|}
\hline
\textbf{Class} & \textbf{Order $n$ and parameter range} & \textbf{Reference} \\
\hline
Welch--Costas
& $r-1$, with $r$ a prime power
& \cite{GolombTaylorPIEEE1984,DrakakisGowMcGuireDAM2009} \\
\hline
Golomb $G_2,G_3,G_4$
& $r-2,r-3,r-4$, with $r$ a prime power
& \cite{GolombTaylorPIEEE1984,DrakakisGowMcGuireDAM2009} \\
\hline
Panario--Sakzad--Stevens--Wang
& $r-1$, with $r$ a prime power
& \cite{PanarioSakzadStevensWangTIT2011} \\
\hline
Singer cycle
& $q+1$, with $q$ an odd prime power
& \cref{T:singer-apn} \\
\hline
Binomial over $\F_{3^d}$
& $(3^d-1)/2$, with $d\ge2$
& \cref{T:ternary-apn} \\
\hline
Completed reciprocal
& $2p$, with $p>5$ prime and $p\equiv5\pmod6$
& \cref{C:reciprocal-infinite} \\
\hline
\end{tabular}
\end{table}

The remainder of this paper is organized as follows.  In
Section~\ref{S:Preliminaries}, the necessary notation and preliminary results
are reviewed.  The permutations induced by Singer cycles are presented in
Section~\ref{S:Singer}.  The construction using binomials over
$\F_{3^d}$ is developed in Section~\ref{S:Binomial}, and the permutations obtained
from completed reciprocals are investigated in Section~\ref{S:Reciprocal}.
A comparison with the standard constructions is given in
Section~\ref{S:Comparison}.  Finally,
Section~\ref{S:Conclusion} concludes the paper.  Representative truth tables
of moderate orders are recorded in Section~\ref{S:Appendix}.

\section{Preliminaries}\label{S:Preliminaries}

In this section, we introduce the notation and recall the preliminary results
used in the three constructions.  We first consider cyclic differences and
some elementary number-theoretic tools.  We then review the projective
geometry and finite field notions required in
Sections~\ref{S:Singer} and~\ref{S:Binomial}.

\subsection{Differential uniformity, difference distribution tables, and differential spectra}

Differential cryptanalysis, introduced by Biham and Shamir, studies how
input differences propagate to output differences through a cryptographic
transformation~\cite{BihamShamirJC1991}.  Motivated by this attack, Nyberg
introduced differential uniformity as the maximum, over all nonzero input
differences and all output differences, of the number of solutions to the
corresponding difference equation~\cite{NybergEC1994}.  Almost perfect nonlinear permutations were
also studied by Beth and Ding~\cite{BethDingEC1994}.  We use the same
notion on the additive group $\Z_n$.

\begin{definition}\label{D:APN}
Let $F:\Z_n\rightarrow\Z_n$ be a function.  Its \emph{differential
uniformity} is
$\du(F)=\max_{u\ne0,\,v\in\Z_n}\#\{x\in\Z_n:F(x+u)-F(x)=v\}$.
The function is called
\emph{perfect nonlinear} (PN) if $\du(F)=1$ and \emph{almost perfect
nonlinear} (APN) if $\du(F)=2$.
All additions and subtractions are taken in the cyclic group $\Z_n$.
Throughout this paper, APN is used in this group-theoretic sense.
\end{definition}

The numbers occurring in \cref{D:APN} are recorded in the
\emph{differential distribution table} (DDT) of $F$.  Its entry in row $u$
and column $v$ is
$\DDT_F(u,v)=\#\{x\in\Z_n:F(x+u)-F(x)=v\}$.
For a permutation $F:\Z_n\rightarrow\Z_n$, its \emph{differential spectrum} is the
multiset $ [\DDT_F(u,v)\,|\,u\in\Z_n\setminus\{0\},\ v\in\Z_n]$.
The same multiset is specified by
\begin{equation}
 \mathcal S(F)=(A_0,A_1,\ldots,A_n),                  \label{E:spectrum}
\end{equation}
where $ A_j=\#\{(u,v)\in(\Z_n\setminus\{0\})\times\Z_n:  \DDT_F(u,v)=j\}$.
Thus $A_j$ is the multiplicity of $j$ in the differential spectrum.
By counting the DDT entries and their total multiplicity, we have $\sum_{j=0}^n A_j=n(n-1)$ and $\sum_{j=0}^n jA_j=n(n-1)$.
For an APN permutation, only $A_0,A_1,A_2$ can be nonzero.
For every fixed $u\ne0$,
$\sum_{v\in\Z_n}\DDT_F(u,v)=n$ and $\DDT_F(u,0)=0$, since $F$ is
injective.
Thus, the $n$ derivative values take at most $n-1$ possible nonzero
values, and therefore $\du(F)\ge2$. Hence, APN permutations attain the
lowest differential uniformity possible for a permutation. At the S-box
level, this minimizes the maximum probability of a differential transition
with a nonzero input difference. The differential resistance of a complete
cipher, however, also depends on its diffusion layer, round structure, and
the resulting number of active S-boxes~\cite{BihamShamirJC1991}.
Input and output group automorphisms, translations, and compositional
inversion relabel or transpose the DDT entries.  They therefore preserve
both differential uniformity and $\mathcal S(F)$ in \eqref{E:spectrum}.  In
particular, $\DDT_{F^{-1}}(v,u)=\DDT_F(u,v)$.  The row with input difference
zero and the column with output difference zero both contain one entry equal
to $n$ and $n-1$ zero entries.  It follows that transposition also preserves
the spectrum after the zero-input row is omitted.  If two constructions have
different domain orders, no relabeling of their inputs and outputs can make
them equivalent.  We use this fact for the infinite families below, and we
use the differential spectrum to check the examples in the appendix.

The small-order tables in Sections~\ref{S:Singer}--\ref{S:Reciprocal} summarize
the number of distinct differential spectra and the number of permutations having each
spectrum.  Distinct spectra imply inequivalence under the operations above;
the converse need not hold.

\subsection{Number-theoretic preliminaries}

We recall the number-theoretic notions that are used in the
proofs.   For any prime $r$, the $r$-adic valuation $v_r(n)$ is the largest exponent
$e$ such that $r^e$ divides $n$.
We repeatedly use the following observation.  If $m>r$ and
$v_r(m)=1$, then $m/r>1$ has a prime divisor different from $r$, so $m$ has
at least two distinct prime divisors and is not a prime power.
We use the standard lifting-the-exponent formula
\begin{equation}
 v_2(x^m-1)=v_2(x-1)+v_2(x+1)+v_2(m)-1              \label{E:lte}
\end{equation}
when $x$ is odd and $m$ is even.
Dirichlet's theorem on primes in arithmetic progressions states that every
progression $r\bmod m$ with $\gcd(r,m)=1$ contains infinitely many primes.
Let $p$ be an odd prime.  The \emph{Legendre symbol}, or quadratic character,
is defined as
\begin{equation*}
 \chi(x)=\left(\frac{x}{p}\right)=
 \begin{cases}
  0,&x=0,\\
  1,&x\text{ is a nonzero square in }\F_p,\\
 -1,&x\text{ is a nonsquare in }\F_p.
 \end{cases}
\end{equation*}
 We use the multiplicativity
$\chi(xy)=\chi(x)\chi(y)$ and quadratic reciprocity.  In particular, for
$p>3$, $ \chi(-3)=-1 $ if and only if $ p\equiv5\pmod6$.
We also use the following elementary quadratic character sum.
\begin{lemma}\label{L:quadratic-character-sum}
If $\lambda\in\F_p^*$ and $r,s\in\F_p$ are distinct, then
\begin{equation*}
 \sum_{z\in\F_p}\chi\bigl(\lambda(z-r)(z-s)\bigr)=-\chi(\lambda).
\end{equation*}
\end{lemma}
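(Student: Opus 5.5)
The plan is to pull out the constant $\lambda$ first and then reduce the quadratic to a single shifted variable. By multiplicativity, $\chi(\lambda(z-r)(z-s))=\chi(\lambda)\chi((z-r)(z-s))$. It therefore suffices to show that
\[
\sum_{z\in\F_p}\chi\bigl((z-r)(z-s)\bigr)=-1 .
\]
Substituting $w=z-r$ changes the sum to $\sum_{w}\chi(w(w-d))$ with $d=r-s\ne0$, since translation permutes $\F_p$.

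Next, evaluate $\sum_w\chi(w(w+e))$ for $e=-d\ne0$ directly. The term $w=0$ contributes $0$. For $w\ne0$, write $w(w+e)=w^2(1+e w^{-1})$. Since $\chi(w^2)=1$, the term equals $\chi(1+e w^{-1})$. As $w$ runs over $\F_p^*$, the element $t=1+e w^{-1}$ runs over $\F_p\setminus\{1\}$, because $w\mapsto ew^{-1}$ is a bijection of $\F_p^*$. Hence
\[
\sum_{w\ne0}\chi\bigl(w(w+e)\bigr)=\sum_{t\in\F_p}\chi(t)-\chi(1)=0-1=-1,
\]
using that $\chi$ sums to zero over $\F_p$ (there are equally many nonzero squares and nonsquares, since $p$ is odd). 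Multiplying back by $\chi(\lambda)$ gives the claimed value $-\chi(\lambda)$.

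The only point requiring any care is the bijection $w\mapsto 1+ew^{-1}$ from $\F_p^*$ onto $\F_p\setminus\{1\}$. This is where the distinctness $r\ne s$ (so $e\ne0$) is used: if $r=s$ the sum would instead be $p-1$. Everything else is routine.
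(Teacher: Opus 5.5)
Your proof is correct, but it takes a different route from the paper's.

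The paper's argument is geometric:
\begin{itemize}
\item It completes the square, writing $(z-r)(z-s)=(z-m)^2-d^2$ with $m=(r+s)/2$ and $d=(r-s)/2$.
\item It then counts the affine points of the conic $v^2=u^2-d^2$ in two ways. The first count is $\sum_{u}\bigl(1+\chi(u^2-d^2)\bigr)$. The second is $p-1$, obtained by parametrizing $(u-v)(u+v)=d^2$ by $w=u-v\in\F_p^*$.
\item Comparing the two counts gives $\sum_u\chi(u^2-d^2)=-1$.
\end{itemize}

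You never leave the character sum. For $w\ne0$ you factor out the square $w^2$, and you use the bijection $w\mapsto 1+ew^{-1}$ from $\F_p^*$ onto $\F_p\setminus\{1\}$. This reduces everything to the orthogonality relation $\sum_{t\in\F_p}\chi(t)=0$. That is a standard trick for Jacobi-type sums.

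Your version is shorter and needs no point counting. The paper's version interprets the sum as a point count on a genus-zero curve. This parallels the genus-one point count behind the Weil bound \eqref{E:weil}, which the paper uses later for $T_p$.

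There is one small slip. With $w=z-r$ you get $z-s=w+(r-s)$, so the sum is $\sum_w\chi\bigl(w(w+d)\bigr)$ with $d=r-s$, not $w(w-d)$. This is harmless, since your computation only uses that the shift is nonzero.
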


\begin{proof}
Let $m=(r+s)/2$ and $d=(r-s)/2$, so $d\ne0$ and
$(z-r)(z-s)=(z-m)^2-d^2$.  Let $N$ be the number of pairs
$(u,v)\in\F_p^2$ satisfying $v^2=u^2-d^2$.  Note that $ N=\sum_{u\in\F_p}\bigl(1+\chi(u^2-d^2)\bigr)$
and  $(u-v)(u+v)=d^2$.  For each
$w\in\F_p^*$ there is exactly one pair $(u,v)$ with $u-v=w$, namely
$u=(w+d^2/w)/2$ and $v=(d^2/w-w)/2$.  Hence $N=p-1$.  Comparing the
two expressions gives $\sum_u\chi(u^2-d^2)=-1$.  Multiplication by $\lambda$ and
the multiplicativity of $\chi$ prove the result.
\end{proof}

We also use the following special case of the Weil bound for character sums.
If $f\in\F_p[X]$ is a square-free cubic, then
\begin{equation}
 \left|\sum_{z\in\F_p}\chi(f(z))\right|\le2\sqrt p. \label{E:weil}
\end{equation}
Indeed, the smooth projective curve $Y^2=f(X)$ has genus one and
$p+1+\sum_{z\in\F_p}\chi(f(z))$ rational points; the Hasse--Weil bound gives
\eqref{E:weil}.
For the primes $p\equiv5\pmod6$ considered below, this estimate guarantees
the existence of multipliers satisfying the conditions in the third
construction.  We refer to
\cite{IrelandRosenGTM1990,LidlNiederreiterCUP1997}
for these facts.

\subsection{Projective spaces, projectivities, and Singer cycles}

The first two constructions will use projective geometry, so we recall the required
notions in some detail.  Let $V=\F_q^d$.  A point of the projective space
$\PG(d-1,q)$ is a one-dimensional vector subspace of $V$.  Equivalently, a
nonzero vector $(x_0,\ldots,x_{d-1})$ represents the homogeneous point $ [x_0:\cdots:x_{d-1}]$,
and two vectors represent the same point precisely when they differ by a
nonzero scalar in $\F_q$.  Therefore $ |\PG(d-1,q)|=\frac{q^d-1}{q-1}$.
For $d=2$, the projective line has the affine chart $ \mathbb P^1(\F_q)=\{[x:1]:x\in\F_q\}\cup\{[1:0]\}=\F_q\cup\{\infty\}$.
Let $\operatorname{GL}_d(q)$ denote the general linear group of invertible $d\times d$ matrices over $\F_q$. Every matrix
in $\operatorname{GL}_d(q)$ defines an invertible $\F_q$-linear transformation of $V$ and therefore acts on its one-dimensional subspaces, namely,
the points of $\PG(d-1,q)$. Since every nonzero scalar matrix $\lambda I_d$ acts trivially on these points, the resulting projective linear group
is $\PGL_d(q)=\operatorname{GL}_d(q)/\{\lambda I_d:\lambda\in\F_q^*\}$. On the projective line $\mathbb P^1(\F_q)=\F_q\cup\{\infty\}$,
a matrix $M=\begin{pmatrix}a&b\\c&d\end{pmatrix}\in\operatorname{GL}_2(q)$, where $ad-bc\ne0$,
induces the M\"obius transformation $x\mapsto (ax+b)/(cx+d)$. For $x\in\F_q$ with $cx+d=0$, its image is $\infty$; moreover, $M(\infty)=a/c$ if $c\ne0$, and $M(\infty)=\infty$ if $c=0$.
\begin{lemma}\label{L:two-fixed-points}
A nonidentity M\"obius transformation has at most two fixed points on
$\mathbb P^1(\F_q)$.
\end{lemma}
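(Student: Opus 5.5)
We argue with homogeneous coordinates, which treats $\infty$ on the same footing as the affine points and avoids case splits. Let $M=\begin{pmatrix}a&b\\c&d\end{pmatrix}\in\operatorname{GL}_2(q)$ represent a nonidentity M\"obius transformation, so $M$ is not a scalar multiple of $I_2$. A point $[x:y]\in\mathbb P^1(\F_q)$ is fixed by $M$ exactly when $(x,y)^T$ is an eigenvector of $M$. Equivalently, the vectors $(ax+by,\,cx+dy)$ and $(x,y)$ are proportional, which means their $2\times2$ determinant vanishes:
\begin{equation*}
 Q(x,y)=cx^2+(d-a)xy-by^2=0 .
\end{equation*}

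\textbf{Step 1.} Since $M$ is not scalar, the coefficients $c$, $d-a$, $-b$ are not all zero. Hence $Q$ is a nonzero binary quadratic form over $\F_q$.

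\textbf{Step 2.} A nonzero binary form of degree two has at most two zeros on $\mathbb P^1(\F_q)$. To see this, suppose $Q$ has a zero $[x_0:y_0]$. Then the linear form $y_0X-x_0Y$ divides $Q$, because a suitable change of coordinates moves the point to $[1:0]$, and there the vanishing of the $X^2$-coefficient gives the factor $Y$. The quotient is a nonzero linear form, which contributes at most one further projective zero. So $Q$ has at most two projective zeros, and therefore $M$ has at most two fixed points.

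\textbf{Alternative affine route.} One can instead work in the chart $\F_q\cup\{\infty\}$. The affine fixed points satisfy $cx^2+(d-a)x-b=0$, and $\infty$ is fixed iff $c=0$.
\begin{itemize}
\item If $c\ne0$, the quadratic has at most two roots and $\infty$ is not fixed.
\item If $c=0$, the equation is linear, $(d-a)x=b$, and it is not the zero equation, since $a=d$ and $b=0$ would make $M$ scalar. It therefore has at most one root, and together with $\infty$ this gives at most two fixed points.
\end{itemize}

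There is no real obstacle here. The only point needing care is the bookkeeping at $\infty$ together with the observation that "nonidentity" means $M$ is not a scalar matrix. Either argument settles the lemma, and I would present the affine case split, since it matches the chart convention fixed just above the statement.
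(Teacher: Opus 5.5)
Your proof is correct and follows essentially the same route as the paper. In both, the fixed-point condition is a binary quadratic form (yours is the negative of the paper's $-cX^2+(a-d)XY+bY^2$), which is nonzero exactly because $M$ is not scalar and so has at most two projective zeros, and the affine chart with the linear case $c=0$ is handled just as the paper notes; your Step~2 also justifies the zero bound for binary forms, which the paper simply asserts.
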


\begin{proof}
For a point $[X:Y]$, the fixed-point condition is the homogeneous equation
\begin{equation*}
 -cX^2+(a-d)XY+bY^2=0.
\end{equation*}
It is a nonzero quadratic form unless the matrix is scalar, which would give
the identity in $\PGL_2(q)$.  A nonzero homogeneous quadratic has at most two
zeros on the projective line.  This includes $[1:0]=\infty$; in the affine
chart the equation reduces to $cx^2+(d-a)x-b=0$, with the linear case
$c=0$ included.
\end{proof}

A \emph{Singer cycle} of $\PG(d-1,q)$ is a cyclic group that acts regularly
on the projective points; the same term is also used for a generator of this
group.  Here ``regularly'' means that for every ordered pair of points there is
exactly one group element taking the first point to the second.  To construct
one, identify $V$ with $\F_{q^d}$ as an
$\F_q$-vector space.  Multiplication by a primitive element $\alpha$ permutes
the one-dimensional subspaces.  Scalar multiplications by $\F_q^{*}$ are
projectively trivial, and hence the induced cycle has order $(q^d-1)/(q-1)$.
For $d=2$, a Singer group is a cyclic subgroup $C$ of $\PGL_2(q)$ of order $q+1$; such a subgroup is also called
a \emph{nonsplit torus}. Every nonidentity element of $C$ is diagonalizable over $\F_{q^2}$ but not over $\F_q$, and all such elements
have the same pair of conjugate eigenlines over $\F_{q^2}$. Conversely, this pair of eigenlines uniquely determines $C$. The normalizer of
$C$ in $\PGL_2(q)$ is isomorphic to $C_{q+1}\rtimes C_2$, where the nonidentity element of $C_2$ acts on $C_{q+1}$ by inversion; in particular,
the normalizer has order $2(q+1)$. For these standard structural facts and further background on finite projective geometry, see
\cite{HirschfeldOUP1998,LidlNiederreiterCUP1997}.
For a group $G$, an element $x\in G$, and a subgroup $C\le G$, we use the
standard notation $C_G(x)=\{g\in G:gx=xg\}$ and $N_G(C)=\{g\in G:gCg^{-1}=C\}$
for the centralizer of $x$ and the normalizer of $C$, respectively.

\subsection{Labeling the points of \texorpdfstring{$\PG(d-1,q)$}{PG(d-1,q)} by exponents}

Let $K=\F_{q^d}$ and $\alpha$ be a primitive element of $K^{*}=K\setminus \{0\}$.
Put $n_q=(q^d-1)/(q-1)$.  The quotient $K^*/\F_q^*$ of nonzero vectors by
$\F_q^*$-scalars is the point set of $\PG(d-1,q)$.  In what follows, $[x]$
denotes the point $\F_qx$.  Since $\alpha^{n_q}\in\F_q^*$ and
$\alpha\F_q^*$ generates $K^*/\F_q^*$, the correspondence $i\mapsto[\alpha^i]$ is a
bijection from $\Z_{n_q}$ to the point set of $\PG(d-1,q)$.  This bijection
transfers permutations of the projective points to permutations of $\Z_{n_q}$.
If a function
$H:K^*\to K^*$ satisfies
$H(\lambda x)\in\F_q^*H(x)$ for all $\lambda\in\F_q^*$, then
$[x]\mapsto[H(x)]$ is well defined on these projective points.  In particular,
the binomial $u x^r+v x^t$ has this property whenever
$r\equiv t\pmod {q-1}$.  An invertible $\F_q$-linear transformation $L$ also induces a
projective transformation because $L(\lambda x)=\lambda L(x)$.  A polynomial
$L(x)=\sum_j c_jx^{q^j}$ is called a $q$-\emph{linearized polynomial}; it
represents an $\F_q$-linear transformation on every extension field containing its
coefficients.

\subsection{Completed reciprocals}

For an odd prime $p$, the \emph{completed reciprocal} is the permutation $\iota$ of $\F_p$ defined
by $\iota(0)=0$ and $\iota(x)=x^{-1}$ for $x\ne0$. By Fermat's little theorem, $\iota(x)=x^{p-2}$ for
every $x\in\F_p$, and $\iota$ is an involution. Projective inversion $x\mapsto x^{-1}$ extends naturally to a M\"obius transformation on $\mathbb P^1(\F_p)$ that interchanges $0$ and $\infty$; the completed reciprocal instead remains on $\F_p$ and fixes $0$.
Let $\chi$ denote the quadratic character of $\F_p$. Since inversion preserves quadratic character, $\chi(\iota(x))=\chi(x)$. We denote the indicator of the nonzero squares by $\rho$, so that $\rho(x)=1$ if $\chi(x)=1$ and $\rho(x)=0$ otherwise; in particular, $\rho(0)=0$.

\section{APN Permutations from Singer Cycles}\label{S:Singer}
The first construction uses the orbit of a Singer cycle to label the
projective line by the elements of $\Z_{q+1}$.  Under the labeling
$i\mapsto R^iP_0$, adding $a$ to a label corresponds to applying $R^a$ to the
projective point.  A differential equation on $\Z_{q+1}$ can therefore be
translated into a fixed-point equation for a projectivity.
In  \cite[Theorem~15]{PanarioSakzadStevensWangTIT2011}, Panario et al.
called the completed rational permutation a M\"obius function and combined it
with exponentiation and discrete logarithms on a cyclic group of order $q-1$.
Here the cyclic
group generated by a Singer cycle acts on all $q+1$ projective points, and
the construction imposes the condition $MJ\ne JM$, which
ensures that the projectivity arising from every differential equation is not
the identity.

\subsection{The construction}

\begin{construction}\label{Con:singer}
Let $q$ be an arbitrary odd prime power and put $n=q+1$. Let
$C\le\PGL_2(q)$ be any Singer group. Fix a generator $R$ of $C$ and a base
point $P_0\in\mathbb P^1(\F_q)$. Here $R^iP_0$ denotes the image of the
projective point $P_0$ under $R^i$, not a chosen vector representative.
Since $C$ acts regularly on $\mathbb P^1(\F_q)$, the function
$\exp_{R,P_0}\colon\Z_n\to\mathbb P^1(\F_q)$ defined by
$\exp_{R,P_0}(i)=R^iP_0$ is a bijection. Denote its inverse by
$\log_{R,P_0}$, and let $J=R^{n/2}$ be the unique element of order two in
$C$. For each $M\in\PGL_2(q)$, define
\begin{equation}
F_M=\log_{R,P_0}\circ M\circ\exp_{R,P_0}.
\label{E:singer-induced}
\end{equation}
\end{construction}

As a composition of bijections, $F_M$ is a permutation of $\Z_n$.
Moreover, $F_M(i)=j$ if and only if $M(R^iP_0)=R^jP_0$.
For an explicit realization of $R$, identify $\F_q^2$ with
$K=\F_{q^2}$, choose a primitive element $\alpha\in K^*$, and write
$\alpha^2=t\alpha+u$ with $t,u\in\F_q$.  Relative to the basis
$(1,\alpha)$, multiplication by $\alpha$ is represented by
\begin{equation*}
 R=\begin{pmatrix}0&u\\1&t\end{pmatrix}.
\end{equation*}
Here $u\ne0$, and the projective class of $R$ has order $q+1$ and acts
regularly on $\mathbb P^1(\F_q)$.  A change of basis conjugates $R$ and its
Singer group.  The values of $F_M$ are obtained by listing the orbit
$P_0,RP_0,\ldots,R^qP_0$.

The following lemma will be used to prove that the projectivity obtained from
each nonzero input and output difference is not the identity.

\begin{lemma}\label{L:singer-intersection}
For every $M\in\PGL_2(q)$, the equality
$C\cap MCM^{-1}=\{1\}$ holds if and only if $MJ\ne JM$.
\end{lemma}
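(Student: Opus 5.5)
We prove the two directions separately, using the structural facts about nonsplit tori recalled in the preliminaries: every nonidentity element of $C$ has the same pair of conjugate eigenlines $\{\ell,\bar\ell\}$ over $\F_{q^2}$, this pair determines $C$, and $N_{\PGL_2(q)}(C)\cong C_{q+1}\rtimes C_2$ has order $2(q+1)$.

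\emph{Easy direction.} Suppose $MJ=JM$. Then $MJM^{-1}=J$. Since $J\in C$, we get $J\in MCM^{-1}$, so $J\in C\cap MCM^{-1}$ and this intersection is nontrivial. Contrapositively, $C\cap MCM^{-1}=\{1\}$ implies $MJ\ne JM$.

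\emph{Main direction.} Suppose some $g\ne1$ lies in $C\cap MCM^{-1}$.
\begin{itemize}
\item Since $g\in C$ and $g\ne 1$, its eigenlines over $\F_{q^2}$ are exactly $\{\ell,\bar\ell\}$.
\item Since $MCM^{-1}$ is again a Singer group, with common eigenline pair $\{M\ell,M\bar\ell\}$, the element $g$ also has eigenlines $\{M\ell,M\bar\ell\}$.
\item A nonidentity element diagonalizable with distinct eigenvalues has exactly two eigenlines. Hence $\{M\ell,M\bar\ell\}=\{\ell,\bar\ell\}$, and since the eigenline pair determines the torus, $MCM^{-1}=C$.
\end{itemize}
Thus $M\in N(C)$. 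By the semidirect-product description, conjugation by $M$ is an automorphism of the cyclic group $C$, namely either the identity or inversion. The unique involution $J$ of $C$ is fixed by every automorphism of $C$, so $MJM^{-1}=J$, that is, $MJ=JM$.

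Combining the two directions, $C\cap MCM^{-1}=\{1\}$ if and only if $MJ\ne JM$.

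\emph{Step needing care.} The third bullet relies on the claim that a nonidentity element of $C$ has exactly two eigenlines, with no degenerate eigenspace. Its two eigenvalues are conjugates $\lambda,\lambda^q$. If $\lambda=\lambda^q$, then $\lambda\in\F_q$, and since the element is diagonalizable over $\F_{q^2}$ it would be a scalar matrix, hence trivial in $\PGL_2(q)$. So for $g\ne1$ the eigenvalues are distinct and there are exactly two eigenlines. This is where $g\ne 1$ is used.

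Alternatively, one can bypass the eigenline argument with a counting route. The centralizer of $J$ is exactly $N(C)$, because $J$ is a nonsplit-type involution whose centralizer in $\PGL_2(q)$ for odd $q$ is dihedral of order $2(q+1)$. This, together with the fact that distinct Singer groups intersect trivially, would also suffice. The eigenline argument is more self-contained given the facts stated earlier, so I would use it.
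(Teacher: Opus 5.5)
Your proof is correct, but it is organized differently from the paper's. The paper takes $1\ne X\in C\cap MCM^{-1}$ and splits on the parity of $\ord(X)$:
\begin{itemize}
\item If the order is even, the unique involution of $\langle X\rangle$ is also the unique involution of $C$ and of $MCM^{-1}$. It therefore equals both $J$ and $MJM^{-1}$, which is a purely group-theoretic step with no geometry.
\item If the order is odd, the paper uses the fact that $C_{\PGL_2(q)}(X)$ is the unique nonsplit torus containing $X$, of order $q+1$. The abelian groups $C$ and $MCM^{-1}$ both contain $X$, so both lie in this centralizer, and they coincide with it by counting orders.
\end{itemize}

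You instead give one uniform argument. Comparing the two eigenline pairs of the same nonidentity element $g$ forces $\{M\ell,M\bar\ell\}=\{\ell,\bar\ell\}$, hence $MCM^{-1}=C$. Then $M$ normalizes $C$ and must fix its unique involution.

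Your route needs only the facts recalled in the preliminaries, plus your check that a nonidentity element has exactly two eigenlines. It also avoids the centralizer fact. That fact is false for involutions, since the centralizer of $J$ is the normalizer, of order $2(q+1)$, and this is exactly why the paper needs a separate even case. The paper's route, in exchange, handles the even case with no eigenvector considerations at all.

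Two minor simplifications are available. First, the eigenvalue conjugacy in your ``step needing care'' is not needed: any diagonalizable nonscalar $2\times2$ matrix has exactly two eigenlines. Second, the semidirect-product description at the end is unnecessary. $MJM^{-1}$ is an element of order two in $MCM^{-1}=C$, so it equals $J$.
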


\begin{proof}
If $MJ=JM$, then $J=MJM^{-1}$ is a nonidentity element of the intersection.
Conversely, suppose that $1\ne X\in C\cap MCM^{-1}$.  If $\ord(X)$ is even,
the unique involution in $\langle X\rangle$ is the unique involution in each
of the cyclic groups $C$ and $MCM^{-1}$; it is therefore both $J$ and
$MJM^{-1}$, so
$MJ=JM$.  If $\ord(X)$ is odd, then $X$, as a nonidentity element of the
Singer group $C$, has two conjugate eigenlines over $\F_{q^2}$.
The pair of conjugate eigenlines of $X$ determines the unique nonsplit torus
containing $X$. This torus is the centralizer of $X$ and has order $q+1$.
Since $C$ is abelian and contains
$X$, we have $C\le C_{\PGL_2(q)}(X)$.  The same argument gives
$MCM^{-1}\le C_{\PGL_2(q)}(X)$. Since $C$, $MCM^{-1}$, and
$C_{\PGL_2(q)}(X)$ all have order $q+1$, the three groups coincide. The
unique elements of order two in $C$ and $MCM^{-1}$ are $J$ and
$MJM^{-1}$, respectively. Hence $MJM^{-1}=J$, or equivalently, $MJ=JM$.
For the standard centralizer facts used here, see
\cite{HirschfeldOUP1998,LidlNiederreiterCUP1997}.

\end{proof}

\subsection{The APN property}

\begin{theorem}\label{T:singer-apn}
Let $C=\langle R\rangle$ and $J=R^{(q+1)/2}$ be as above.  If
$M\in\PGL_2(q)$ satisfies $MJ\ne JM$, then the permutation $F_M$ defined in
\eqref{E:singer-induced} is APN on $\Z_{q+1}$.  Its compositional inverse is
obtained by the same construction.
\end{theorem}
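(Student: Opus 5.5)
Fix a nonzero input difference $a\in\Z_{q+1}$ and an output difference $b\in\Z_{q+1}$, and translate the equation $F_M(x+a)-F_M(x)=b$ into projective geometry. Write $P=R^xP_0$, so that $R^{x+a}P_0=R^aP$. Since $F_M(i)=j$ means $M(R^iP_0)=R^jP_0$, the condition $F_M(x+a)=F_M(x)+b$ becomes
$$M R^a P=R^b M P .$$
Because $\exp_{R,P_0}$ is a bijection, the number of solutions $x\in\Z_{q+1}$ equals the number of points $P\in\mathbb P^1(\F_q)$ satisfying this equation. Equivalently, these are the points $P$ fixed by the projectivity $T_{a,b}=M^{-1}R^{-b}MR^a$.

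The key step is to show that $T_{a,b}$ is not the identity in $\PGL_2(q)$. Suppose $T_{a,b}=1$. Then $MR^aM^{-1}=R^b$. The element $R^a$ is a nonidentity element of $C$ because $a\neq0$ and $R$ has order $q+1$. Hence $R^b=MR^aM^{-1}$ is a nonidentity element lying in both $C$ and $MCM^{-1}$. This contradicts \cref{L:singer-intersection}, because the hypothesis $MJ\ne JM$ gives $C\cap MCM^{-1}=\{1\}$. So $T_{a,b}\neq 1$. By \cref{L:two-fixed-points}, $T_{a,b}$ has at most two fixed points, and therefore $\DDT_{F_M}(a,b)\le 2$ for all $a\ne0$ and all $b$.

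This gives $\du(F_M)\le2$. Since $F_M$ is a permutation, the general lower bound $\du(F_M)\ge2$ holds, so $F_M$ is APN. For the inverse, note that $F_M^{-1}=\log_{R,P_0}\circ M^{-1}\circ\exp_{R,P_0}=F_{M^{-1}}$. Moreover, $M^{-1}J\ne JM^{-1}$, since conjugating $MJ\ne JM$ by $M^{-1}$ gives $JM^{-1}\neq M^{-1}J$. So the inverse arises from the same construction and is again APN. Alternatively, this follows from $\DDT_{F^{-1}}(v,u)=\DDT_F(u,v)$.

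I expect no real obstacle beyond the non-identity step, and that step reduces directly to \cref{L:singer-intersection}. The only care needed is checking that $R^a\ne1$ for $a\ne0$ in $\Z_{q+1}$, and that the translation between labels and points respects the regular action, i.e. $R^{x+a}P_0=R^a(R^xP_0)$.
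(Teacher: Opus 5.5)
Your proposal is correct and follows essentially the same route as the paper: you reduce the difference equation to the fixed points of $T_{a,b}=M^{-1}R^{-b}MR^a$, rule out $T_{a,b}=1$ via \cref{L:singer-intersection}, apply \cref{L:two-fixed-points}, and use $F_M^{-1}=F_{M^{-1}}$ for the inverse. The only difference is cosmetic: you get the nonidentity of $R^b$ from $R^a\ne1$, which covers $b=0$ uniformly, whereas the paper first disposes of $b=0$ by injectivity and then uses $R^b\ne1$.
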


\begin{proof}
Let $n=q+1$.  According to the definition of differential uniformity in
\cref{D:APN}, we need to prove that the differential equation
\begin{equation}
 F_M(x+a)-F_M(x)=b                                    \label{E:singer-difference}
\end{equation}
has at most two solutions $x\in\Z_n$ for every
$a\in\Z_n\setminus\{0\}$ and $b\in\Z_n$.  Since $F_M$ is a permutation,
$\DDT_{F_M}(a,0)=\#\{x:F_M(x+a)=F_M(x)\}=0$ for every $a\ne0$, because
injectivity would otherwise imply $x+a=x$.  It remains to fix
$a,b\ne0\pmod n$, and write $P=R^xP_0$.  In particular,
$R^a\ne1$ and $R^b\ne1$.  By \eqref{E:singer-induced}, equation
\eqref{E:singer-difference} is equivalent to
\begin{equation*}
 M(R^{x+a}P_0)=R^bM(R^xP_0).
\end{equation*}
Indeed, if $j=F_M(x)$, then $M(R^xP_0)=R^jP_0$, while
\eqref{E:singer-difference} gives $F_M(x+a)=j+b$ and hence
$M(R^{x+a}P_0)=R^{j+b}P_0=R^bM(R^xP_0)$.  The converse follows by applying
$\log_{R,P_0}$ to the same equality, which gives
$F_M(x+a)=F_M(x)+b$.  With $P=R^xP_0$, this equation is
$MR^aP=R^bMP$, or equivalently $T_{a,b}P=P$ for
$T_{a,b}=M^{-1}R^{-b}MR^a$.
If $T_{a,b}=1$, then $M^{-1}R^{-b}MR^a=1$, or equivalently,
$MR^aM^{-1}=R^b$. Since $R^a\in C$, the left side belongs to $MCM^{-1}$,
whereas $R^b\in C$. Thus $R^b\in C\cap MCM^{-1}=\{1\}$ by
\cref{L:singer-intersection}, contradicting $R^b\ne1$, which follows from
$b\ne0\pmod n$ and $\ord(R)=n$. Therefore
$T_{a,b}$ is nonidentity.
By~\cref{L:two-fixed-points}, it fixes at most two projective points.  Since
$\exp_{R,P_0}$ is a bijection, each such point corresponds to a unique
$x\in\Z_n$.  Thus \eqref{E:singer-difference} has at most two solutions.
Since every
permutation has differential uniformity at least two, $F_M$ is APN.
Finally, \eqref{E:singer-induced} and
$\log_{R,P_0}=\exp_{R,P_0}^{-1}$ give
\begin{equation*}
 F_M^{-1}=(\log_{R,P_0}\circ M\circ\exp_{R,P_0})^{-1}
 =\log_{R,P_0}\circ M^{-1}\circ\exp_{R,P_0}=F_{M^{-1}}.
\end{equation*}
Moreover, $M^{-1}J=JM^{-1}$ if and only if $MJ=JM$.
Hence $M^{-1}J\ne JM^{-1}$, and applying the same argument with $M$
replaced by $M^{-1}$ shows that $F_{M^{-1}}$ is APN.
\end{proof}

\subsection{Parameters and orders}

\begin{example}\label{Ex:singer-small}
Let $f(X)=X^2-X-4\in\F_7[X]$ and
$K=\F_7[X]/(f(X))$, and denote the residue class of $X$ by $\alpha$.
The discriminant of $f$ is $3$, which is a nonsquare in $\F_7$, and
$\alpha^{24}=-1$ and $\alpha^{16}=2$.  Thus $f$ is irreducible and
$\alpha$ has order $48$.  In the ordered basis $(1,\alpha)$, multiplication
by $\alpha$ is represented by
\begin{equation*}
 R=\begin{pmatrix}0&4\\1&1\end{pmatrix},\qquad
 M=\begin{pmatrix}1&1\\0&1\end{pmatrix},~~ \text{and}~~P_0=\infty.
\end{equation*}
Put $e_0=(1,0)^{\mathsf T}$ and $P_i=[R^ie_0]=[\alpha^i]$.  Direct iteration
on $\mathbb P^1(\F_7)$ gives
\begin{equation*}
 \infty\longmapsto0\longmapsto4\longmapsto5\longmapsto3
 \longmapsto1\longmapsto2\longmapsto6\longmapsto\infty.
\end{equation*}
Thus the projective class of $R$ generates a Singer group, and this orbit lists
all projective points.
Moreover, note that
$$J=R^4=\left(\begin{matrix}6&1\\2&1\end{matrix}\right)$$
does not commute with $M$ in $\PGL_2(7)$.  The induced permutation is
\begin{equation*}
 [F_M(0),\ldots,F_M(7)]=[0,5,3,7,2,6,4,1].
\end{equation*}
Equivalently, its entries are obtained by taking the unique $j$ such that
$MR^ie_0\in\F_7^*R^je_0$.
The permutation is APN, in agreement with \cref{T:singer-apn}, and its
differential spectrum is $(A_0,A_1,A_2)=(24,8,24)$.
\end{example}

\begin{proposition}\label{P:singer-count}
Let $q$ be an odd prime power, and fix a Singer group $C=\langle R\rangle$
and a base point $P_0\in\mathbb P^1(\F_q)$ as in \cref{Con:singer}. Then
exactly
\begin{equation}
q(q^2-1)-2(q+1)
\label{E:singer-count}
\end{equation}
projectivities $M\in\PGL_2(q)$ satisfy $MJ\ne JM$ and hence induce APN
permutations $F_M$ of $\Z_{q+1}$. Moreover, distinct projectivities $M$
induce distinct permutations for the fixed choices of $R$ and $P_0$.
\end{proposition}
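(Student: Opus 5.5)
We need $|\PGL_2(q)|-|C_{\PGL_2(q)}(J)|$. Since $|\PGL_2(q)|=q(q^2-1)$, the count \eqref{E:singer-count} follows once we show $|C_{\PGL_2(q)}(J)|=2(q+1)$.

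\textbf{Centralizer contains the normalizer.} The plan is to show $C_{\PGL_2(q)}(J)=N_{\PGL_2(q)}(C)$ and then use the fact, recalled in the preliminaries, that this normalizer has order $2(q+1)$. Take $g\in N(C)$. Conjugation by $g$ is an automorphism of the cyclic group $C$, so it fixes the unique involution $J$ of $C$. Hence $N(C)\le C(J)$.

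\textbf{Centralizer lies in the normalizer.} Suppose $gJ=Jg$. Then $J=gJg^{-1}\in C\cap gCg^{-1}$, so this intersection is nontrivial. The argument of \cref{L:singer-intersection} is not directly conclusive here, since its even-order case only recovers $gJ=Jg$; we want $gCg^{-1}=C$ instead. The cleanest route is eigenlines. $J$ is a nonidentity element of the nonsplit torus $C$, so it has a pair of conjugate eigenlines over $\F_{q^2}$, and this pair determines $C$. Since $g$ commutes with $J$, it permutes the eigenlines of $J$, so it preserves this pair. Now $gCg^{-1}$ is the nonsplit torus determined by the image of the pair under $g$, which is the same pair. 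By uniqueness, $gCg^{-1}=C$, so $g\in N(C)$.

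One detail needs care: $J$ has distinct eigenvalues, so its eigenlines are exactly two. In $\PGL_2$, commuting means $gJg^{-1}=\lambda J$ at the matrix level. For an involution one could have $\lambda=-1$, which swaps the eigenlines. Either way the pair is preserved, which is what matters. I expect this step to be the main point needing care.

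\textbf{Distinctness.} Suppose $F_M=F_{M'}$. Then $M\circ\exp_{R,P_0}=M'\circ\exp_{R,P_0}$, because $\log_{R,P_0}$ is a bijection. Since $\exp_{R,P_0}$ is surjective onto $\mathbb P^1(\F_q)$, $M$ and $M'$ agree at every projective point. A projectivity is determined by its action on three points, since $\PGL_2(q)$ acts sharply $3$-transitively. Alternatively, $M^{-1}M'$ fixes more than two points, so it is the identity by \cref{L:two-fixed-points}. Hence $M=M'$.

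Combining the count with \cref{T:singer-apn}, each of these projectivities induces an APN permutation $F_M$, and distinct projectivities give distinct permutations.
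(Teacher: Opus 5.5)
Your proof is correct, but it obtains $|C_{\PGL_2(q)}(J)|=2(q+1)$ by a different route from the paper's.

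You prove both inclusions $N_{\PGL_2(q)}(C)\le C_{\PGL_2(q)}(J)$ and $C_{\PGL_2(q)}(J)\le N_{\PGL_2(q)}(C)$. For the second inclusion you use the eigenline-pair argument, which works for any scalar $\mu$ in $gAg^{-1}=\mu A$, since $\mu A$ and $A$ have the same eigenlines. You then take the order $2(q+1)$ of the normalizer of a nonsplit torus from the cited structure theory recalled in the preliminaries. Both inclusions are genuinely needed here, since one inclusion together with that order only gives a bound.

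The paper instead counts the centralizer directly. It represents $J$ by $A$ with $A^2=\lambda I$ and $\lambda$ a nonsquare, so $\F_q[A]\cong\F_{q^2}$. It shows that $B$ centralizes $J$ projectively exactly when $BA=\pm AB$. The commuting matrices are identified with $\F_q[A]^*$, and the anticommuting ones with $S\,\F_q[A]^*$, where $S$ is the Frobenius map. This gives $2(q^2-1)/(q-1)=2(q+1)$. The identification $C_{\PGL_2(q)}(J)=N_{\PGL_2(q)}(C)$, by the same eigenline argument you use, then serves only as a consistency check with the normalizer theorem.

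Your version is shorter and more conceptual, but it treats the normalizer order as a black box. The paper's version is self-contained for the count and in fact re-derives that order.

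The distinctness argument is the same in both. Your appeal to \cref{L:two-fixed-points}, with $M^{-1}M'$ fixing all $q+1\ge4>2$ points, makes the paper's last step explicit.
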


\begin{proof}
Choose a matrix $A\in\operatorname{GL}_2(q)$ representing $J$.  Since $J$ has
order two in $\PGL_2(q)$, we have $A^2=\lambda I$ for some
$\lambda\in\F_q^*$.  This $\lambda$ is a nonsquare: otherwise the polynomial
$X^2-\lambda$, and hence the minimal polynomial of the nonscalar matrix $A$,
would split over $\F_q$, giving an
eigenline over $\F_q$, whereas a nonidentity element of the Singer group $C$
has no such eigenline.  Thus the minimal polynomial of $A$ is irreducible of
degree two and $\F_q[A]\simeq\F_{q^2}$.  Let
$B\in\operatorname{GL}_2(q)$.  Its projective class
centralizes $J$ if and only if $BAB^{-1}=\mu A$ for some
$\mu\in\F_q^*$.  Squaring this equality and using $A^2=\lambda I$ gives
$\mu^2=1$, so $\mu=1$ or $-1$.

If $BA=AB$, then $B$ is a nonzero element of the two-dimensional algebra
$\F_q[A]$; there are $q^2-1$ such matrices.  To count the solutions of
$BA=-AB$, identify the underlying vector space with $\F_{q^2}$ and let $A$
act as multiplication by an element $\beta$ satisfying $\beta^2=\lambda$.
The Frobenius automorphism $S:x\mapsto x^q$ is $\F_q$-linear and satisfies
$S A=-A S$, because $\beta^q=-\beta$.  If $U\in\F_q[A]^*$, then
$(SU)A=S(AU)=-A(SU)$, so $SU$ anticommutes with $A$. Conversely,
$S^2=1$ and $S^{-1}A=-AS^{-1}$; hence, if $BA=-AB$, then
$(S^{-1}B)A=-S^{-1}AB=A(S^{-1}B)$. Thus the anticommuting matrices are
exactly the matrices $SU$ with $U\in\F_q[A]^*$, and there are again $q^2-1$
of them. The commuting and
anticommuting sets are disjoint because $q$ is odd.  After quotienting by the
$q-1$ nonzero scalar matrices, this gives
\begin{equation*}
 |C_{\PGL_2(q)}(J)|=\frac{2(q^2-1)}{q-1}=2(q+1).
\end{equation*}

It remains to identify this centralizer with the normalizer of $C$.  Since
$J$ is the unique involution of $C$, every element of
$N_{\PGL_2(q)}(C)$ fixes $J$ under conjugation.  Conversely, the equations
$BA=AB$ and $BA=-AB$ show that a projectivity centralizing $J$ preserves or
interchanges its two conjugate eigenlines.  This unordered eigenpair
determines the nonsplit torus $C$, so the projectivity normalizes $C$.
Therefore $C_{\PGL_2(q)}(J)=N_{\PGL_2(q)}(C)$, in agreement with the standard
nonsplit-torus normalizer theorem~\cite{HirschfeldOUP1998}.  Since
$|\PGL_2(q)|=q(q^2-1)$, subtracting the $2(q+1)$ elements that centralize
$J$ gives \eqref{E:singer-count}.  Since $\exp_{R,P_0}$ is a bijection, $F_M=F_L$
implies that
$M$ and $L$ agree on every point $\exp_{R,P_0}(i)$, hence on the entire
projective line.  Two projectivities that agree on every point of the
projective line are equal, so $M=L$.  Therefore $M\mapsto F_M$ is
injective. This completes the proof.
\end{proof}

%
%
%

\begin{corollary}\label{C:singer-composition}
For $M,L\in\PGL_2(q)$, we have $F_M\circ F_L=F_{ML}$.  The composition is APN
if and only if $(ML)J\ne J(ML)$.
\end{corollary}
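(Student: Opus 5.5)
The plan is to verify the composition law directly from the definition \eqref{E:singer-induced}, and then obtain the APN criterion from \cref{T:singer-apn} together with a converse argument.

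For the composition law, write $F_M=\log_{R,P_0}\circ M\circ\exp_{R,P_0}$ and $F_L=\log_{R,P_0}\circ L\circ\exp_{R,P_0}$. Since $\exp_{R,P_0}\circ\log_{R,P_0}$ is the identity on $\mathbb P^1(\F_q)$, the inner pair cancels:
\[
F_M\circ F_L=\log_{R,P_0}\circ M\circ L\circ\exp_{R,P_0}=F_{ML}.
\]
This is well defined because $\PGL_2(q)$ acts on the projective line, so $M\circ L$ as maps of points is the action of $ML$.

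For the APN criterion, the ``if'' direction is exactly \cref{T:singer-apn} applied to $ML$. For the ``only if'' direction, assume $N=ML$ commutes with $J$. I will exhibit a DDT entry larger than two. Take $a=(q+1)/2$, so that $R^a=J$. Then for every $x$ we have
\[
N(R^{x+a}P_0)=NJ(R^xP_0)=JN(R^xP_0)=R^{(q+1)/2}N(R^xP_0).
\]
By the equivalence established in the proof of \cref{T:singer-apn}, this says $F_N(x+a)-F_N(x)=(q+1)/2$ for all $q+1$ values of $x$. Hence $\DDT_{F_N}(a,(q+1)/2)=q+1\ge4$, because $q\ge3$, and $F_N$ is not APN.

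The only point needing care is this converse step. Here I must confirm that the translation between the difference equation and the relation $MR^aP=R^bMP$ holds for arbitrary $M$. It does: the derivation in \cref{T:singer-apn} used the hypothesis $MJ\ne JM$ only afterwards, to rule out $T_{a,b}=1$, and not in the translation itself.
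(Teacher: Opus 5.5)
Your proposal is correct and follows essentially the same route as the paper: the composition law comes from cancelling $\exp_{R,P_0}\circ\log_{R,P_0}$, the ``if'' direction is \cref{T:singer-apn} applied to $ML$, and the ``only if'' direction shows $F_{ML}(x+n/2)=F_{ML}(x)+n/2$ for every $x$, so that $\DDT_{F_{ML}}(n/2,n/2)=n$. Your remark that translating the difference equation into $MR^aP=R^bMP$ does not use $MJ\ne JM$ is accurate, and it plays the same role as the paper's direct appeal to the injectivity of $\exp_{R,P_0}$.
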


\begin{proof}
For every $i\in\Z_n$, the definition gives
\begin{equation*}
 F_M(F_L(i))
 =\log_{R,P_0}\bigl(M(\exp_{R,P_0}(F_L(i)))\bigr)
 =\log_{R,P_0}(ML(R^iP_0))=F_{ML}(i).
\end{equation*}
Here the middle equality uses
$\exp_{R,P_0}(F_L(i))=L(\exp_{R,P_0}(i))=L(R^iP_0)$, which follows directly
from \eqref{E:singer-induced}.
If $ML$ does not
centralize $J$, \cref{T:singer-apn} applies.  Otherwise,
$MLJ=JML$, and therefore
\begin{align*}
 \exp_{R,P_0}\bigl(F_{ML}(x+n/2)\bigr)
 &=ML(JR^xP_0)
 &=JML(R^xP_0)
  =\exp_{R,P_0}\bigl(F_{ML}(x)+n/2\bigr).
\end{align*}
The injectivity of $\exp_{R,P_0}$ gives
$F_{ML}(x+n/2)=F_{ML}(x)+n/2$ for every $x\in\Z_n$, so one DDT entry equals
$n$; explicitly, $\DDT_{F_{ML}}(n/2,n/2)=n$.  The permutation is therefore
not APN.
\end{proof}

For small prime values of $q$, we fixed a Singer cycle $R$ and a base point
$P_0$, and exhaustively enumerated every $M\in\PGL_2(q)$ satisfying $MJ\ne JM$.
Table~\ref{Tab:singer-ddt} gives the resulting differential spectra.  In the last
column, $m:(A_0,A_1,A_2)$ means that $m$ permutations have the indicated
spectrum.  Each row uses a fixed Singer cycle $R$ and a fixed base point
$P_0$, and the permutation counts
in its last column sum to the number in \eqref{E:singer-count}.

\begin{table}[htbp]
\caption{Differential spectra of Singer-cycle permutations at small orders}
\label{Tab:singer-ddt}
\centering
\footnotesize
\begin{tabularx}{\textwidth}{|c|c|c|c|>{\centering\arraybackslash}X|}
\hline
$q$&$n$&No. of $M$&No. of spectra&Permutations: differential spectrum\\ \hline
$3$ &$4$ &$16$    &$1$ &$16:(4,4,4)$\\ \hline
$5$ &$6$ &$108$   &$2$ &$72:(12,6,12);\ 36:(13,4,13)$\\ \hline
$7$ &$8$ &$320$   &$2$ &$192:(24,8,24);\ 128:(25,6,25)$\\ \hline
$11$&$12$&$1296$  &$2$ &$720:(60,12,60);\ 576:(61,10,61)$\\ \hline
$13$&$14$&$2156$  &$2$ &$1176:(84,14,84);\ 980:(85,12,85)$\\ \hline
$17$&$18$&$4860$  &$2$ &$2592:(144,18,144);\ 2268:(145,16,145)$\\ \hline
$19$&$20$&$6800$  &$2$ &$3600:(180,20,180);\ 3200:(181,18,181)$\\ \hline
$23$&$24$&$12096$ &$2$ &$6336:(264,24,264);\ 5760:(265,22,265)$\\ \hline
\end{tabularx}
\end{table}

Infinite subfamilies can also be obtained by restricting $q$ to arithmetic
progressions.  Dirichlet's theorem gives infinitely many primes
$q\equiv2\pmod9$, for which
$v_3(q+1)=1$, and infinitely many primes $q\equiv3\pmod8$, for which
$v_2(q+1)=2$.  These congruences describe the orders directly and do
not enter the APN proof.

For comparison with the order forms of the earlier finite field classes, an
order $n$ can have the form $r-j$, $1\le j\le4$, only if at least one of
$n+1,n+2,n+3,n+4$ is a prime power.  The next theorem excludes all four
possibilities.

\begin{theorem}\label{T:singer-separated}
The progression
\begin{equation}
 q\equiv269363\pmod{471240}                           \label{E:singer-progression}
\end{equation}
contains infinitely many primes.  For every such prime $q$, put $n=q+1$.
Then $v_3(n)=1$, $v_2(n)=2$, none of
$n+1,n+2,n+3,n+4$ is a prime power, and $n\ne2p$ for every prime $p$.
Consequently, the corresponding APN permutations in \cref{T:singer-apn} have orders
outside the Welch--Costas and Panario--Sakzad--Stevens--Wang order form $r-1$
and the standard Golomb order forms $r-2,r-3,r-4$, where $r$ is a prime power.
\end{theorem}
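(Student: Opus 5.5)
The plan is a congruence computation driven by the factorization $471240=2^3\cdot3^2\cdot5\cdot7\cdot11\cdot17$. First I check $\gcd(269363,471240)=1$ by reducing $269363$ modulo each prime factor. It is odd and not divisible by $5$. Its digit sum is $29$, so it is not divisible by $3$. Its residues modulo $7$, $11$, $17$ are $3$, $6$, $15$, all nonzero. Dirichlet's theorem then gives infinitely many primes in \eqref{E:singer-progression}. Each such $q$ is odd, so \cref{T:singer-apn} applies. Each such $q$ also satisfies $q\ge269363$, which keeps all later size comparisons trivially valid.

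Next I reduce $n=q+1\equiv269364\pmod{471240}$ modulo each prime-power factor. This gives $n\equiv4\pmod 8$, $n\equiv3\pmod9$, $n\equiv4\pmod5$, $n\equiv4\pmod7$, $n\equiv7\pmod{11}$ and $n\equiv16\pmod{17}$. The first two give $v_2(n)=2$ and $v_3(n)=1$ directly. Since $4\mid n$, $n$ cannot equal $2p$ for a prime $p$.

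Then I treat the four shifts separately. In each case I either exhibit two distinct prime divisors, or use the observation in the preliminaries that $m>r$ with $v_r(m)=1$ is not a prime power.

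\begin{itemize}
\item For $n+1$, the residues give $5\mid n+1$ and $17\mid n+1$. Since $n+1>85$, it has two distinct prime divisors.
\item For $n+2$, we have $n+2\equiv6\pmod8$, so $v_2(n+2)=1$, and $n+2>2$.
\item For $n+3$, we have $n+3\equiv6\pmod9$, so $v_3(n+3)=1$, and $n+3>3$.
\item For $n+4$, we have $8\mid n+4$ and $n+4\equiv0\pmod{11}$, so both $2$ and $11$ divide it.
\end{itemize}

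Hence none of $n+1,\dots,n+4$ is a prime power.

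The final clause follows as in the remark preceding the theorem. If $n=r-j$ with $1\le j\le4$ and $r$ a prime power, then $n+j=r$ would be a prime power, which was just excluded. So these orders avoid the forms $r-1,\dots,r-4$. No step is genuinely hard. The only point needing care is the correct reduction of $269364$ modulo each factor, especially confirming the exact valuations $v_2(n+2)=1$ and $v_3(n+3)=1$. These hold because $n+2\equiv6\pmod8$ and $n+3\equiv6\pmod9$.
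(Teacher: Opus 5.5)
Your proof is correct and is essentially the paper's argument: reduce the progression modulo the prime-power factors of $471240$, then show each of $n+1,\dots,n+4$ has either two distinct prime divisors or a prime of valuation one. The only deviations are minor. You exclude $n+3$ via $v_3(n+3)=1$ instead of the paper's $21\mid n+3$, so the factor $7$ of the modulus goes unused. You exclude $n=2p$ via $4\mid n$, which strictly also needs $p\ne2$ (i.e.\ $n\ne4$, immediate from $n\ge269364$), whereas the paper uses $12\mid n$ and $n/2>6$.
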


\begin{proof}
Reducing the congruence in \eqref{E:singer-progression} modulo the relevant
divisors of $471240$ gives $q\equiv2\pmod9$, $q\equiv3\pmod8$,
$q\equiv3\pmod5$, $q\equiv15\pmod{17}$, $q\equiv3\pmod7$, and
$q\equiv6\pmod{11}$. Since $\gcd(269363,471240)=1$, Dirichlet's theorem
guarantees infinitely many primes in this arithmetic progression.
For any such prime $q$, put $n=q+1$. The congruences modulo $9$ and $8$ give
$n\equiv3\pmod9$ and $n\equiv4\pmod8$. Hence $3$ divides $n$ but $9$ does
not, while $4$ divides $n$ but $8$ does not. The same residue information
gives $n+1=q+2\equiv0\pmod{85}$, $n+2=q+3\equiv6\pmod8$,
$n+3=q+4\equiv0\pmod{21}$, and $n+4=q+5\equiv0\pmod{22}$.
Since $85=5\cdot17$, $21=3\cdot7$, and $22=2\cdot11$, each of
$n+1$, $n+3$, and $n+4$ has at least two distinct prime divisors. Moreover,
$n+2\equiv6\pmod8$, so $n+2$ is divisible by $2$ but not by $4$. Since
$n+2>2$, its odd cofactor is greater than one and therefore has an odd prime
divisor. Thus none of $n+1,n+2,n+3,n+4$ is a prime power.
Finally, $12$ divides $n$, so $6$ divides $n/2$. Since $n/2>6$, the integer
$n/2$ is composite, and hence $n$ cannot be written as $2p$ with $p$ prime.
\end{proof}

\cref{Tab:singer-orders} gives smaller instances to show the same
comparison with the order forms of the earlier classes.  In every row, each
of $n+1,n+2,n+3,n+4$ has at least two distinct prime divisors and therefore is
not a prime power.

\begin{table}[htbp]
\caption{Singer orders not of the forms $r-1,r-2,r-3,r-4$}
\label{Tab:singer-orders}
\centering
\footnotesize
\begin{tabular}{|c|c|c|}
\hline
$q$ & $n=q+1$ & Factorizations of $n+1,n+2,n+3,n+4$\\
\hline
$83$  & $84$  & $5\cdot17,\ 2\cdot43,\ 3\cdot29,\ 2^3\cdot11$\\ \hline
$131$ & $132$ & $7\cdot19,\ 2\cdot67,\ 3^3\cdot5,\ 2^3\cdot17$\\ \hline
$173$ & $174$ & $5^2\cdot7,\ 2^4\cdot11,\ 3\cdot59,\ 2\cdot89$\\ \hline
$257$ & $258$ & $7\cdot37,\ 2^2\cdot5\cdot13,\ 3^2\cdot29,\ 2\cdot131$\\ \hline
$443$ & $444$ & $5\cdot89,\ 2\cdot223,\ 3\cdot149,\ 2^6\cdot7$\\ \hline
\end{tabular}
\end{table}

\begin{example}\label{Ex:z258}
For the order near $256$ in \cref{Tab:singer-orders}, take $q=257$,
$f(X)=X^2-6X+3$, and
$K=\F_{257}[X]/(f(X))$.  Let $\alpha$ be the residue class of $X$.
The discriminant $24$ is a nonsquare in $\F_{257}$, and
$ \alpha^{33024}=-1,
 \alpha^{22016}=179+240\alpha,
 \alpha^{1536}=125+176\alpha
$.
Since $66048=257^2-1$ has prime divisors $2,3,43$, these identities show that
$\alpha$ is primitive.  With respect to $(1,\alpha)$, let us take $P_0=\infty=[1:0]$,
\begin{equation*}
 R=\begin{pmatrix}0&254\\1&6\end{pmatrix}~~\text{and}~~
 M=\begin{pmatrix}1&1\\0&1\end{pmatrix}
\end{equation*}
over $\F_{257}$.  Thus $R$ represents multiplication by $\alpha$ and
$P_i=[R^i(1,0)^{\mathsf T}]=[\alpha^i]$.  For every $i$, the value $F_M(i)$
is the unique $j$ for which
$MR^i(1,0)^{\mathsf T}\in\F_{257}^*R^j(1,0)^{\mathsf T}$.
The matrix $R$ induces a $258$-cycle on the projective line,
and $$J=R^{129}=\left(\begin{matrix}90&90\\227&167\end{matrix}\right)$$
does not commute with $M$.  Hence $F_M$ is an APN permutation of $\Z_{258}$.
Its truth table and differential spectrum are given in
Section~\ref{A:TruthTables}.
\end{example}

\section{APN Permutations Induced by Binomials}\label{S:Binomial}

The second construction labels the points of $\PG(d-1,q)$, identified with
$\F_{q^d}^*/\F_q^*$, by the exponents of a primitive element.  A binomial whose
two exponents are congruent modulo
$q-1$ induces a function on these points.  If the kernel in $\F_{q^d}^*$ of the
power function defined by the exponent difference is exactly $\F_q^*$, then each
scalar in $\F_q^*$ contributes at most one projective point satisfying a
differential equation.  We prove this statement for every binomial that
induces a permutation of the projective points; no additivity or
linearized-polynomial assumption is used.  An explicit family over
$\F_{3^d}$ is then obtained from invertible linearized polynomials.

\subsection{A general condition on the exponents}

\begin{construction}\label{Con:binomial}
Let $q>2$ be a prime power, let $d\ge2$, let $K=\F_{q^d}$, and choose a
primitive element $\alpha\in K^*$.  Put $n_q=(q^d-1)/(q-1)$.  For
nonnegative integers $r,t$ satisfying $r\equiv t\pmod{q-1}$ and
$u,v\in K^*$, put $H(x)=ux^r+vx^t$.  Suppose that $H(x)\ne0$ for
$x\ne0$ and that
$[x]\mapsto[H(x)]$ is a permutation of the points of $\PG(d-1,q)$, identified
with $K^*/\F_q^*$.  For each $i\in\Z_{n_q}$, define $F_H(i)$ to be the
unique $j\in\Z_{n_q}$ such that $H(\alpha^i)\in\F_q^*\alpha^j$.
Then $F_H$ is a permutation of $\Z_{n_q}$.
\end{construction}

The congruence between $r$ and $t$ makes this definition independent of the
representative of $[\alpha^i]$.  Exponents on $K^*$ are understood modulo
$q^d-1$; interchanging the two terms permits us to take $r\ge t$ when a
nonnegative exponent difference is convenient.  The condition $H(x)\ne0$
ensures that the projective function $[x]\mapsto[H(x)]$ is defined everywhere;
this requirement is implicit in the assumption that the induced function is a permutation.

\begin{theorem}\label{T:exponent-condition}
In \cref{Con:binomial}, assume that
\begin{equation}
 \gcd(r-t,q^d-1)=q-1.                              \label{E:exponent-condition}
\end{equation}
Then $\du(F_H)\le q-1$.  In particular, when $q=3$, every binomial satisfying
the hypotheses of \cref{Con:binomial} and \eqref{E:exponent-condition}
induces an APN permutation of $\Z_{(3^d-1)/2}$.
\end{theorem}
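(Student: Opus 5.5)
The plan is to translate the difference equation for $F_H$ into an equation in $K^*$, split it according to the $\F_q^*$-scalar that relates the two projective points, and show that each scalar contributes at most one projective point. Since there are $q-1$ scalars, this gives the bound $q-1$.

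Fix $a\in\Z_{n_q}\setminus\{0\}$ and $b\in\Z_{n_q}$, and put $\beta=\alpha^a$, $\gamma=\alpha^b$, $x=\alpha^i$. By the definition of $F_H$, the equation $F_H(i+a)-F_H(i)=b$ holds if and only if $[H(\beta x)]=[\gamma H(x)]$. Equivalently, there is some $\lambda\in\F_q^*$ with $H(\beta x)=\lambda\gamma H(x)$. This condition depends only on the point $[x]$, because $r\equiv t\pmod{q-1}$. Interchanging the two terms if necessary, we may assume $r\ge t$. Expanding $H$, the equation becomes
\begin{equation*}
 u x^{r}\bigl(\beta^{r}-\lambda\gamma\bigr)+v x^{t}\bigl(\beta^{t}-\lambda\gamma\bigr)=0 .
\end{equation*}
Let $S_\lambda$ be the set of points $[x]$ satisfying this equation for the given $\lambda$. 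Every solution $i$ has $[\alpha^i]$ in $\bigcup_{\lambda}S_\lambda$, and $i\mapsto[\alpha^i]$ is a bijection. So it suffices to show $|S_\lambda|\le1$ for each $\lambda$.

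The key step is to rule out the degenerate case in which both coefficients vanish. That case means $\beta^r=\lambda\gamma=\beta^t$, hence $\beta^{r-t}=1$. The map $y\mapsto y^{r-t}$ on the cyclic group $K^*$ of order $q^d-1$ has kernel of order $\gcd(r-t,q^d-1)=q-1$ by \eqref{E:exponent-condition}. This kernel is the unique subgroup of that order, namely $\F_q^*$. Thus $\alpha^a\in\F_q^*$, i.e.\ $a\equiv0\pmod{n_q}$, a contradiction.

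If exactly one coefficient vanishes, the equation reads $ux^r\cdot(\text{nonzero})=0$ or $vx^t\cdot(\text{nonzero})=0$. Neither has a solution, since $u,v,x\ne0$. If both coefficients are nonzero, the equation becomes $x^{r-t}=c$ for a fixed $c\in K^*$. Its solution set in $K^*$ is empty or a single coset of the kernel $\F_q^*$, hence at most one projective point. Therefore $|S_\lambda|\le1$, and the difference equation has at most $q-1$ solutions, so $\du(F_H)\le q-1$.

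For $q=3$ this gives $\du(F_H)\le2$. Every permutation has differential uniformity at least two (as noted after \cref{D:APN}), so $F_H$ is APN on $\Z_{(3^d-1)/2}$. The only delicate point is the identification of the kernel of $y\mapsto y^{r-t}$ with $\F_q^*$, which the gcd hypothesis supplies directly. It is used twice: once to exclude the degenerate case, and once to bound the number of projective solutions of $x^{r-t}=c$.
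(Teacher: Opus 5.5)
Your proposal is correct and follows essentially the same route as the paper. You reduce the difference equation to $H(\beta x)=\lambda\gamma H(x)$ for some $\lambda\in\F_q^*$, use the gcd hypothesis to identify the kernel of $y\mapsto y^{r-t}$ with $\F_q^*$ (both to rule out simultaneous vanishing of the coefficients and to collapse the roots of $x^{r-t}=c$ to one projective point), and then sum over the $q-1$ scalars; your union bound over the sets $S_\lambda$ is sufficient, so the paper's additional remark that each point determines at most one $\lambda$ is not needed.
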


\begin{proof}
 According to the definition of differential uniformity in \cref{D:APN}, we
 need to prove that the differential equation
 $F_H(i+a)-F_H(i)=b$ has at most $q-1$ solutions $i\in\Z_{n_q}$ for every
 $a\in\Z_{n_q}\setminus\{0\}$ and $b\in\Z_{n_q}$; when $q=3$, this bound is
 two.  Fix such $a$ and $b$, and put $A=\alpha^a$ and
 $B=\alpha^b$.  Both $A$ and $B$ are nonzero.  Write $x=\alpha^i$ and
 $j=F_H(i)$.  Then $H(x)=\mu\alpha^j$ for some $\mu\in\F_q^*$. By the
definition of $F_H$, the equality $F_H(i+a)=j+b$ holds if and only if
$[H(Ax)]=[\alpha^{j+b}]$. Since $B=\alpha^b$ and
$[H(x)]=[\alpha^j]$, we have $[\alpha^{j+b}]=[BH(x)]$. Thus the residue $i$
solves $F_H(i+a)-F_H(i)=b$ if and only if
$H(Ax)=\lambda BH(x)$ for some $\lambda\in\F_q^*$. After substituting the
binomial $H$, this equality becomes
\begin{equation}
 u(A^r-\lambda B)x^r+v(A^t-\lambda B)x^t=0.       \label{E:binomial-difference}
\end{equation}
The condition \eqref{E:exponent-condition} implies $r\ne t$; after
interchanging the two terms if necessary, assume $r>t$.
The two coefficients in \eqref{E:binomial-difference} cannot vanish
simultaneously.  Indeed, simultaneous vanishing would give
$A^{r-t}=1$.  Since $r\equiv t\pmod{q-1}$, every element of $\F_q^*$ lies in
the kernel of $z\mapsto z^{r-t}$.  By \eqref{E:exponent-condition}, this
kernel has order $q-1$ and is therefore exactly $\F_q^*$.  Hence
$A=\alpha^a\in\F_q^*$, which is equivalent to $n_q\mid a$, or to $a=0$ in
$\Z_{n_q}$.

 We first consider the general case in which both coefficients in
 \eqref{E:binomial-difference} are nonzero.  Division by $x^t$ reduces the
 equation to
 $x^{r-t}=C_\lambda$, where
\begin{equation*}
 C_\lambda=-\frac{v(A^t-\lambda B)}{u(A^r-\lambda B)}\ne0.
\end{equation*}
If $x_1$ and $x_2$
are two roots, then $(x_1/x_2)^{r-t}=1$, so
 $x_1/x_2\in\F_q^*$ by the kernel identity above.  Thus all roots represent
 one projective point in $K^*/\F_q^*$.

 We next consider the special case in which one coefficient in
 \eqref{E:binomial-difference} vanishes.  Since the two coefficients cannot
 vanish simultaneously, the equation has no nonzero root in this case.

 Combining the two cases, each fixed $\lambda\in\F_q^*$ gives at most one
 projective solution.  Conversely, for a fixed projective point $[x]$, the
 relation $H(Ax)=\lambda BH(x)$ determines at most one $\lambda$, because
$BH(x)\ne0$.  This value does not depend on the representative: if
$\gamma\in\F_q^*$, then $\gamma^r=\gamma^t$ and both sides of the relation
for $\gamma x$ acquire the same factor $\gamma^t$.  There are $q-1$ possible
values of $\lambda$, so the differential equation is satisfied by at most
$q-1$ projective points.  Since
 $i\mapsto[\alpha^i]$ is a bijection from $\Z_{n_q}$ to
 $K^*/\F_q^*$, these projective points give at most $q-1$ solutions for $i$.
 This proves $\du(F_H)\le q-1$.  For $q=3$, the upper bound is two.  The
 general lower bound for permutations gives $\du(F_H)=2$, so $F_H$ is APN.
\end{proof}

The hypothesis that $[x]\mapsto[H(x)]$ is a permutation is independent of the
greatest-common-divisor condition in \eqref{E:exponent-condition}.  Therefore
\cref{T:exponent-condition} applies whenever the first hypothesis is proved,
whether or not $H$ arises from a linearized polynomial.

\subsection{An explicit family over \texorpdfstring{$\F_{3^d}$}{F3d}}

Let $d\ge2$, $s\ge1$ satisfy $\gcd(s,d)=1$, and put
$n=(3^d-1)/2$.  Choose $c\in\F_{3^d}^*$ such that $-c$ is a nonsquare, and
choose a positive integer $k$ with $\gcd(k,n)=1$.  Define
$H_{k,c,s}(x)=x^{k3^s}+c x^k$.

\begin{theorem}\label{T:ternary-apn}
Let $\alpha$ be primitive in $\F_{3^d}$.  For each $i\in\Z_n$, define
$F_{k,c,s}(i)$ to be the unique $j\in\Z_n$ such that
$H_{k,c,s}(\alpha^i)\in\{\alpha^j,-\alpha^j\}$.
Then $F_{k,c,s}$ is an APN permutation.  In particular, $c=-\alpha$ gives an
explicit member for every $d\ge2$, every $s$ coprime to $d$, and every unit
$k$ modulo $n$.
\end{theorem}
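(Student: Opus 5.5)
We will derive \cref{T:ternary-apn} as a direct application of \cref{T:exponent-condition} with $q=3$, $r=k3^s$, $t=k$, $u=1$, $v=c$. The hypotheses of \cref{Con:binomial} and the gcd condition \eqref{E:exponent-condition} then need to be checked one at a time.

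\textbf{Step 1: the exponent congruence and the definition of $F_{k,c,s}$.} We have $r-t=k(3^s-1)$, which is even, so $r\equiv t\pmod 2$. Since $\F_3^*=\{\pm1\}$, the condition $H(\alpha^i)\in\{\alpha^j,-\alpha^j\}$ coincides with $H(\alpha^i)\in\F_3^*\alpha^j$. Hence $F_{k,c,s}$ is exactly $F_H$ from \cref{Con:binomial}.

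\textbf{Step 2: the gcd condition.} We need $\gcd(k(3^s-1),3^d-1)=2$. Because $\gcd(s,d)=1$, we have $\gcd(3^s-1,3^d-1)=3^{\gcd(s,d)}-1=2$.

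For the factor $k$: since $\gcd(k,n)=1$, any common odd prime of $k$ and $3^d-1$ would divide $n$, which is impossible. Thus only powers of $2$ can cause trouble. Write $g=\gcd(k(3^s-1),3^d-1)$; it suffices to show $v_2(g)=1$.

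\begin{itemize}
\item If $d$ is odd, then $n=(3^d-1)/2$ is odd, so $v_2(3^d-1)=1$ and we are done.
\item If $d$ is even, then $n$ is even, so $k$ must be odd. Since $\gcd(s,d)=1$ forces $s$ odd, $3^s-1\equiv2\pmod 4$. Hence $v_2(k(3^s-1))=1$.
\end{itemize}

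This parity bookkeeping is routine but must be done with care, and it is the one place where the unit hypothesis on $k$ interacts with the $2$-part.

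\textbf{Step 3: $H$ is nonvanishing and induces a permutation of projective points.} This is the main step. Because $\gcd(k,n)=1$, $\gcd(k,2n)$ divides $2$, so the map $[x]\mapsto[x^k]$ is a bijection on $K^*/\F_3^*$. Indeed, $x^k\in\F_3^*y^k$ with $(x/y)^k=\pm1$ forces $(x/y)^{2k}=1$, hence $(x/y)^2$ lies in the kernel of the $k$-th power on the subgroup of squares of order $n$, which is trivial. So $x/y=\pm1$.

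It therefore suffices to show that $L(y)=y^{3^s}+cy$ induces a permutation of the projective points, since $H(x)=L(x^k)$. The map $L$ is $\F_3$-linear, so it is enough to show $L$ is injective on $K$. A nonzero root would satisfy $y^{3^s-1}=-c$. But $3^s-1$ is even, so $y^{3^s-1}$ is a square, while $-c$ is a nonsquare by hypothesis. Thus $L$ is invertible and $H(x)\ne0$ for $x\ne0$. An invertible linear map induces a bijection of $\PG(d-1,3)$, and composing with the $k$-th power bijection shows $[x]\mapsto[H(x)]$ is a permutation.

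\textbf{Conclusion.} \cref{T:exponent-condition} now gives $\du(F_{k,c,s})\le2$, hence $F_{k,c,s}$ is APN.

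For the explicit member $c=-\alpha$, we have $-c=\alpha$, a primitive element. Its order $3^d-1$ is even, so $\alpha$ is a nonsquare. Therefore every $d\ge2$, every $s$ coprime to $d$, and every unit $k$ modulo $n$ give an instance.
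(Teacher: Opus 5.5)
Your proof is correct and follows the paper's route: write $H_{k,c,s}(x)=L_{c,s}(x^k)$, check that this induces a permutation of $\F_{3^d}^*/\{\pm1\}$, verify $\gcd\bigl(k(3^s-1),3^d-1\bigr)=2$, and invoke \cref{T:exponent-condition} with $q=3$. The differences are local. Your observation that $y^{3^s-1}$ is always a square gives injectivity of $L_{c,s}$ more directly than the paper's image computation, and it shows that $\gcd(s,d)=1$ is needed only for the gcd condition. The paper, in turn, obtains the gcd condition without your parity case split by writing $3^s-1=2h$ and using $\gcd(kh,n)=1$.
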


\begin{proof}
The quotient $\F_{3^d}^*/\{\pm1\}$ is cyclic of order $n$.  Hence the power
function $[x]\mapsto[x^k]$ is bijective on this quotient because
$\gcd(k,n)=1$; it is well defined since $x'=\pm x$ implies
$(x')^k=\pm x^k$.  The transformation
$L_{c,s}(y)=y^{3^s}+cy$ has a nonzero kernel element precisely when
$y^{3^s-1}=-c$.  The identity
\begin{equation}
 \gcd(3^s-1,3^d-1)=2                              \label{E:gcd-linearized}
\end{equation}
follows from the standard identity
$\gcd(3^s-1,3^d-1)=3^{\gcd(s,d)}-1$ together with $\gcd(s,d)=1$.  Since
$\F_{3^d}^*$ is cyclic of order $3^d-1$, the kernel of
$y\mapsto y^{3^s-1}$ has order two.  Its image is therefore a subgroup of
$\F_{3^d}^*$ of order $(3^d-1)/2$. Since the cyclic group
$\F_{3^d}^*$ has a unique subgroup of this order, the image is precisely the
subgroup of squares.  Hence the nonsquare $-c$ does not belong to the image,
and $L_{c,s}$ has trivial kernel.  As $L_{c,s}$ is an $\F_3$-linear
transformation on the finite-dimensional vector space $\F_{3^d}$, it is invertible.
It also respects multiplication by $\pm1$, because
$L_{c,s}(-y)=-L_{c,s}(y)$.  Thus $L_{c,s}$ induces a permutation of
$\F_{3^d}^*/\{\pm1\}$.  Since
$H_{k,c,s}(x)=L_{c,s}(x^k)$, its induced projective permutation is the composition of
this permutation with the power permutation above, and is therefore a
permutation.  Moreover,
writing $3^s-1=2h$, \eqref{E:gcd-linearized} gives $\gcd(h,n)=1$, and thus
$ \gcd\bigl(k(3^s-1),3^d-1\bigr)=2$.
Indeed, $3^d-1=2n$ and the last greatest common divisor equals
$2\gcd(kh,n)=2$.  Finally, $3^d-1$ is even, and a generator of the cyclic
group $\F_{3^d}^*$ cannot lie in its subgroup of squares.  Thus the primitive
element $\alpha$ is a nonsquare, and $c=-\alpha$ always satisfies the required
condition because $-c=\alpha$.
The result now follows from \cref{T:exponent-condition} with $q=3$.
\end{proof}

The identity $F_{k,c,s}=F_{1,c,s}\circ\mu_k$, where
$\mu_k(i)=ki$ on $\Z_n$, shows that changing $k$ only precomposes $F_{1,c,s}$ with
a group automorphism.  Therefore it is not claimed to yield a new equivalence
class.  Since $\mu_k$ is a group automorphism,
precomposition by $\mu_k$ preserves every DDT multiplicity up to a
permutation of the input differences.  The hypothesis of
\cref{T:exponent-condition} is not restricted to binomials obtained from the
linearized polynomials used in this subsection.

\subsection{Examples and orders}

\begin{example}\label{Ex:ternary-small}
Let $f(X)=X^3+2X^2+X+1$. Since $f$ is a primitive irreducible polynomial
over $\F_3$, the quotient $K=\F_3[X]/(f(X))$ is a field with $27$ elements.
Let $\alpha=X+(f(X))$ be the residue class of $X$ in $K$. Then
$f(\alpha)=0$, and $\alpha$ is a primitive element of $K$; equivalently,
$K^*=\langle\alpha\rangle$ and $\alpha$ has multiplicative order $26$.
Take $d=3$, $s=1$, $c=-\alpha$, and $k=5$, so that the domain has order
$n=(3^3-1)/2=13$.  Here
$H(x)=x^{15}-\alpha x^5=L(x^5)$ is not a linearized polynomial, although its
permutation property is supplied by the linearized transformation
$L(x)=x^3-\alpha x$.  In the ordered power basis $(1,\alpha,\alpha^2)$,
multiplication by $\alpha$ and $L$ have the matrices
\begin{equation*}
 R=\begin{pmatrix}0&0&2\\1&0&2\\0&1&1\end{pmatrix}~~\text{and}~~
 L=\begin{pmatrix}1&2&0\\2&2&0\\0&0&2\end{pmatrix}.
\end{equation*}
Put $e_0=(1,0,0)^{\mathsf T}$.  The vectors $R^ie_0$, up to multiplication
by $\pm1$, enumerate the $13$ one-dimensional subspaces.  The value $F(i)$
is the unique $j$ satisfying
$LR^{5i}e_0\in\{R^je_0,-R^je_0\}$.
Applying $H$ to the $13$ projective points and expressing the images in
terms of their cyclic labels gives the truth table
\[
[F(0),\ldots,F(12)]
 =[4,3,12,2,8,9,0,11,6,1,10,7,5].
\]
This permutation has differential uniformity two, and its differential spectrum is
represented by
$(A_0,A_1,A_2)=(36,84,36)$.  The difference between the two exponents is $10$, and
$\gcd(10,26)=2$, so its APN property follows directly from
\cref{T:exponent-condition}.  The identity
$F_{5,-\alpha,1}=F_{1,-\alpha,1}\circ\mu_5$ also makes the relation to the
representative with $k=1$ explicit.
\end{example}

Table~\ref{Tab:linearized-ddt} exhausts the pairs $(s,c)$ with
$1\le s<d$, $\gcd(s,d)=1$, and $-c$ nonsquare for each value of $d$
listed in the table, with a fixed primitive element $\alpha$ and $k=1$.  Direct comparison of their
truth tables shows computationally that no two parameter pairs in these rows
induce the same permutation.  The permutations obtained by varying $k$ have the same
differential spectra and are not counted again.  For the parameter ranges listed in the table, all
permutations with $k=1$ at a fixed order have the same spectrum; this is a
computational observation, not an additional general theorem.

\begin{table}[htbp]
\caption{Differential spectra of the ternary-binomial permutations with $k=1$}
\label{Tab:linearized-ddt}
\centering
\footnotesize
\begin{tabularx}{\textwidth}{|c|c|c|c|c|>{\centering\arraybackslash}X|}
\hline
$d$&$n$&Pairs $(s,c)$&Distinct permutations&No. of spectra&Permutations: differential spectrum\\ \hline
$2$&$4$   &$4$   &$4$   &$1$ &$4:(4,4,4)$\\ \hline
$3$&$13$  &$26$  &$26$  &$1$ &$26:(36,84,36)$\\ \hline
$4$&$40$  &$80$  &$80$  &$1$ &$80:(400,760,400)$\\ \hline
$5$&$121$ &$484$ &$484$ &$1$ &$484:(3600,7320,3600)$\\ \hline
\end{tabularx}
\end{table}

For an order $n$ in this section, the order forms $r-1,r-2,r-3,r-4$ of the
earlier finite field classes require $n+1,n+2,n+3,n+4$, respectively, to be
prime powers.  An order $q+1$ from the Singer construction requires $n-1$ to
be a prime power.  The next theorem excludes all five possibilities
simultaneously.

\begin{theorem}\label{T:ternary-separated}
Let $d\ge14$ be an integer satisfying $d\equiv14\pmod{4620}$, and put $n=(3^d-1)/2$.  Then the APN permutations
in \cref{T:ternary-apn} have orders for which none of
\begin{equation}
 n-1,n+1,n+2,n+3,n+4                                  \label{E:ternary-neighbors}
\end{equation}
is a prime power.  Moreover, $n\ne2p$ for every prime $p$.
\end{theorem}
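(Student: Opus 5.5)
The plan is to exclude each neighbor in \eqref{E:ternary-neighbors} from being a prime power by elementary congruences. Write $d=14+4620k$. Since $4620=4\cdot3\cdot5\cdot7\cdot11$ and $28\mid4620$, $42\mid4620$, the residue of $d$ modulo each of $4,5,6,11,28,42$ is determined: $d\equiv2\pmod4$, $d\equiv4\pmod5$, $d\equiv2\pmod6$, $d\equiv3\pmod{11}$, $d\equiv14\pmod{28}$, $d\equiv14\pmod{42}$. The congruences I will use require only these residues, since each is governed by the multiplicative order of $3$ modulo a small modulus, and each such order divides one of these periods. For every neighbor I will use one of two tools. Either some prime $\ell$ has $v_\ell(m)=1$ with $m>\ell$, which gives a second prime divisor by the observation in the number-theoretic preliminaries. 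Or two distinct primes divide $m$.

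Two neighbors carry a factor $3$ exactly once. For $n-1$, write $n-1=(3^d-3)/2=3\,(3^{d-1}-1)/2$. For $n+2$, write $n+2=(3^d+3)/2=3\,(3^{d-1}+1)/2$. In both, the cofactor is prime to $3$, so $v_3=1$. Both numbers exceed $3$, so neither is a prime power.

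The other three neighbors are handled by the orders of $3$ modulo small primes.
\begin{itemize}
\item For $n+1=(3^d+1)/2$: since $d$ is even and $3^d\equiv1\pmod8$, the number $n+1$ is odd. Since $d/2$ is odd, $9^{d/2}\equiv-1\pmod5$, so $5\mid n+1$. The prime $29$ divides $3^{14}+1=10\cdot29\cdot16493$, and $3$ has order $28$ modulo $29$. Hence $d\equiv14\pmod{28}$ gives $3^d\equiv-1\pmod{29}$, so $29\mid n+1$.
\item For $n+3=(3^d+5)/2$: modulo $7$ the order of $3$ is $6$ and $d\equiv2\pmod6$, so $3^d+5\equiv9+5\equiv0\pmod7$. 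Modulo $49$ the order is $42$ and $d\equiv14\pmod{42}$. A direct computation gives $3^{14}\equiv30\pmod{49}$, so $3^d+5\equiv35\pmod{49}$. Thus $v_7(n+3)=1$ with $n+3>7$.
\item For $n+4=(3^d+7)/2$: modulo $16$ the order of $3$ is $4$ and $d\equiv2\pmod4$, so $16\mid3^d+7$ and $n+4$ is even. Modulo $11$ the order is $5$ and $d\equiv4\pmod5$, so $3^d+7\equiv81+7\equiv0\pmod{11}$. Hence $2\cdot11\mid n+4$.
\end{itemize}

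Finally, $d$ even gives $8\mid3^d-1$, so $4\mid n$. Then $n/2$ is even and larger than $2$, so $n\ne2p$.

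The only real obstacle is finding a second prime for $n+1$ and for $n+3$, where the obvious small primes fail. For $n+1$, I would factor $3^{14}+1$ and use a prime of order $28$; the fact $28\mid4620$ is exactly what makes this work. For $n+3$, I would switch to the valuation argument at $7$, checking $3^{14}\bmod49$ by hand. Everything else is a routine verification of residues.
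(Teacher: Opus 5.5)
Your proof is correct, but it settles three of the claims differently from the paper.

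\textbf{The paper's route.} It uses one criterion throughout: a prime dividing the integer to exponent exactly one.
\begin{itemize}
\item From $3^{14}\equiv19\pmod{25}$, $3^{14}\equiv30\pmod{49}$ and $3^{14}\equiv81\pmod{121}$ it gets $v_5(n+1)=v_7(n+3)=v_{11}(n+4)=1$. This is what motivates the modulus $4620$, which is divisible by $20$, $42$ and $110$.
\item For $n\ne2p$ it invokes the lifting-the-exponent formula to pin down $v_2(n)=2$ exactly.
\end{itemize}

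\textbf{Where you agree.} Your $v_3$ argument for $n-1$ and $n+2$ matches the paper's. So does your $v_7$ argument for $n+3$.

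\textbf{Where you differ.}
\begin{itemize}
\item For $n+1$ you exhibit the two distinct primes $5$ and $29$. The prime $29$ comes from factoring $3^{14}+1$ and from $3$ having order $28\mid4620$ modulo $29$.
\item For $n+4$ you exhibit the primes $2$ (via $16\mid3^d+7$) and $11$.
\item For $n\ne2p$ you correctly observe that $4\mid n$ suffices. This follows from $d$ being even, with no need for LTE.
\end{itemize}

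\textbf{Trade-off.} Your version avoids arithmetic modulo $25$ and $121$ and the LTE formula. The cost is an ad hoc prime $29$ and a mix of two criteria. The paper's version is more uniform and explains the choice of modulus.

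Incidentally, neither argument really needs $4620$. The true orders of $3$ modulo $25$ and $121$ are $20$ and $5$ (since $3^5=243\equiv1\pmod{121}$). Hence both proofs go through for every $d\equiv14\pmod{420}$.

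Your listed residue $d\equiv3\pmod{11}$ is never used. The remark that $n+1$ is odd is also unnecessary.
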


\begin{proof}
The modulus $4620$ is divisible by $4$, $20$, $42$, and $110$. Direct
modular exponentiation gives $3^{14}\equiv19\pmod{25}$,
$3^{14}\equiv30\pmod{49}$, and $3^{14}\equiv81\pmod{121}$. The
multiplicative orders of $3$ modulo $25$, $49$, and $121$ divide $20$, $42$,
and $110$, respectively. Since $d\equiv14\pmod{4620}$ and each of
$20$, $42$, and $110$ divides $4620$, the same three congruences hold with
$14$ replaced by $d$. Therefore,
$n+1=(3^d+1)/2\equiv10\pmod{25}$,
$n+3=(3^d+5)/2\equiv42\pmod{49}$, and
$n+4=(3^d+7)/2\equiv44\pmod{121}$.
These congruences give
$v_5(n+1)=1$, $v_7(n+3)=1$, and $v_{11}(n+4)=1$.
Furthermore, we have $n-1=3(3^{d-1}-1)/2$ and
$n+2=3(3^{d-1}+1)/2$.
Since $d-1\ge1$, we have
$3^{d-1}-1\equiv-1\pmod3$ and $3^{d-1}+1\equiv1\pmod3$.  Thus the second
factor in each expression is greater than one and is not divisible by $3$, so
$v_3(n-1)=v_3(n+2)=1$.  Hence $n-1$ and $n+2$ have the
prime divisor $3$ with valuation one and are larger than $3$; similarly,
$n+1,n+3,n+4$ have the prime divisors $5,7,11$, respectively, with valuation
one and are larger than those primes.  None of the five integers is therefore
a prime power.  As $d$ ranges over the progression $14\pmod{4620}$, the value
$n=(3^d-1)/2$ increases strictly and hence gives infinitely many distinct
orders.  Finally, $d\equiv2\pmod4$, and the
lifting-the-exponent formula \eqref{E:lte} gives
$ v_2(3^d-1)=v_2(2)+v_2(4)+v_2(d)-1=3$,
since $v_2(d)=1$.  Hence $v_2(n)=2$.  If $n=2p$ with $p$ an odd prime,
then $v_2(n)=1$; if $p=2$, then $n=4$.  Since $n>4$, neither case is possible.
\end{proof}

\begin{example}\label{Ex:ternary-separated}
Let $K=\F_3[X]/(X^{14}+X+2)$ and let $\alpha$ be the residue class of $X$;
the polynomial $X^{14}+X+2$ is primitive. Take $d=14$, $s=1$, $c=-\alpha$,
and $k=1$. For each $i\in\Z_n$, define $F(i)$ as the unique $j\in\Z_n$
such that $\alpha^{3i}-\alpha^{i+1}\in\{\alpha^j,-\alpha^j\}$. The order is
$n=(3^{14}-1)/2=2{,}391{,}484$. The integers in
\eqref{E:ternary-neighbors} factor as $n-1=3\cdot797161$,
$n+1=5\cdot29\cdot16493$, $n+2=2\cdot3\cdot398581$,
$n+3=7\cdot341641$, and $n+4=2^6\cdot11\cdot43\cdot79$. The APN property
follows from \cref{T:ternary-apn}.
\end{example}

\section{APN Permutations from Completed Reciprocals}\label{S:Reciprocal}

The third construction is given directly as a function of a residue
$z\in\Z_{2p}$.  One formula involving a completed reciprocal is used when
$z$ is even, and another is used when $z$ is odd.  The output parity is
defined by the Legendre symbol.  In the proof, two roots of the same
quadratic equation have different output parities.  The construction
requires neither a discrete logarithm nor a labeling of projective points.

\subsection{The construction}

\begin{construction}\label{Con:reciprocal}
Let $p\ge11$ be a prime, $a\in\F_p^*$, and $c\in\F_p$.  For
$z\in\Z_{2p}$, let $E(z)=(1-(-1)^z)/2$ and $X_c(z)\in\F_p$ be the
residue class of $z+cE(z)$ modulo $p$.  In $\F_p$, set
$A_a(z)=1+(a-1)E(z)$ and
$Y_{a,c,p}(z)=A_a(z)X_c(z)^{p-2}$.  Let
$R_{c,p}(z)\in\{0,1\}$ be congruent to
$\rho(X_c(z))+E(z)(1-\chi(X_c(z))^2)$ modulo $2$. We define
\begin{equation}
 \mathcal B_{a,c,p}(z)
 \equiv pR_{c,p}(z)+(p+1)Y_{a,c,p}(z)\pmod{2p}.       \label{E:reciprocal-map}
\end{equation}
\end{construction}

The quantities $(-1)^z$ and $E(z)$ are well defined on $\Z_{2p}$, and
$E(z)$ is the parity of $z$.  Moreover,
$X_c(z)^{p-2}=\iota(X_c(z))$, including the value at zero.  Let
$e=z\bmod2$ and $x=z\bmod p$.  If $e=0$, then $X_c(z)=x$,
$Y_{a,c,p}(z)=\iota(x)$, and $R_{c,p}(z)=\rho(x)$.  If $e=1$, then
$X_c(z)=x+c$, $Y_{a,c,p}(z)=a\iota(x+c)$, and $R_{c,p}(z)$ is congruent to
$\rho(x+c)+1-\chi(x+c)^2$ modulo $2$.
The denominator is zero at $x=0$ when $e=0$ and at $x=-c$ when $e=1$; the
completed reciprocal assigns the value zero at both inputs.  The variables
$e$ and $x$ are not independent because both are determined by $z$.  When
$\chi(a)=-1$, the output parity is equivalently
$e+\rho(Y_{a,c,p}(z))\pmod2$, also when $X_c(z)=0$.

\begin{proposition}\label{P:reciprocal-permutation}
If $\chi(a)=-1$, then the function $\mathcal B_{a,c,p}$ in
\eqref{E:reciprocal-map} is a
permutation of $\Z_{2p}$.
\end{proposition}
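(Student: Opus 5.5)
We use the Chinese remainder decomposition $\Z_{2p}\cong\Z_2\times\Z_p$. The output $w=\mathcal B_{a,c,p}(z)=pR+(p+1)Y$ satisfies
\[
w\equiv R\pmod 2\qquad\text{and}\qquad w\equiv Y\pmod p,
\]
because $p\equiv1$, $p+1\equiv0 \pmod 2$, while $p\equiv0$, $p+1\equiv1\pmod p$. So $\mathcal B_{a,c,p}$ corresponds to $z\mapsto(R_{c,p}(z),Y_{a,c,p}(z))\in\Z_2\times\F_p$. It suffices to show this map is injective on the $2p$ inputs. Write $z\leftrightarrow(e,x)$ with $e=z\bmod 2$ and $x=z\bmod p$, where every pair $(e,x)$ occurs exactly once.

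\emph{Even inputs.} On $e=0$ the map is $x\mapsto(\rho(x),\iota(x))$. Since $\iota$ is a bijection of $\F_p$, the second coordinates are pairwise distinct.

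\emph{Odd inputs.} On $e=1$ the map is $x\mapsto\bigl(\rho(x+c)+1-\chi(x+c)^2 \bmod 2,\ a\iota(x+c)\bigr)$. As $x\mapsto a\iota(x+c)$ is a bijection, these outputs are again pairwise distinct. So the even part gives $p$ distinct outputs and so does the odd part, and we only need to rule out collisions between them.

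\emph{Reduction to parities.} Use the remark after the construction: when $\chi(a)=-1$, the output parity equals $e+\rho(Y)\pmod 2$, including at the zero input. For the even part, $\rho(\iota(x))=\rho(x)$. For the odd part, with $y=x+c$ and $Y=a\iota(y)$, we need $\rho(Y)\equiv\rho(y)+1-\chi(y)^2+1\pmod 2$. We check this in three cases.
\begin{itemize}
\item If $y=0$: then $Y=0$, $\rho(Y)=0$, and the right side is $0+1+1\equiv0$.
\item If $y$ is a nonzero square: then $\chi(Y)=\chi(a)\chi(y)=-1$, so $\rho(Y)=0$, and the right side is $1+0+1\equiv0$.
\item If $y$ is a nonsquare: then $Y$ is a nonzero square, so $\rho(Y)=1$, and the right side is $0+0+1=1$.
\end{itemize}
Consequently, as $Y$ ranges over $\F_p$, each even input produces the output $(\rho(Y),Y)$ and each odd input produces $(1+\rho(Y),Y)$. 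For any fixed value $Y$ these two parities are different, so no even output equals an odd output.

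It follows that the image has exactly $2p$ elements, and $\mathcal B_{a,c,p}$ is a bijection. The one step that needs care is this parity check at the completed-reciprocal points $x=0$ and $x=-c$, and it is covered by the case $y=0$ above.
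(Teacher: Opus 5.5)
Your proof is correct and takes essentially the same route as the paper. Both arguments split outputs by the Chinese remainder theorem into (parity, residue modulo $p$) and use that $\iota$ and $a\iota(\cdot+c)$ are bijections of $\F_p$. Both then show that the output parity equals $e+\rho(Y)$, including at the zero reciprocal argument, so the two preimages of each residue $Y$ get opposite parities. Your version additionally checks explicitly that $pR+(p+1)Y$ has coordinates $(R,Y)$ in $\Z_2\times\F_p$, and it carries out the three-case parity verification that the paper states in a single line.
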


\begin{proof}
Write $e=E(z)$ and $x=z\bmod p$.  The output residue is
\begin{equation*}
 Y_{a,c,p}(z)=
 \begin{cases}
  \iota(x),&e=0,\\
  a\iota(x+c),&e=1.
 \end{cases}
\end{equation*}
 In both cases, the specified function of $x$ is a permutation of $\F_p$.  If the output residue is
$y\ne0$, inversion preserves square class and multiplication by the
nonsquare $a$ reverses it.  The definition of $R_{c,p}$ therefore gives output parity
$e+\rho(y)$ in both cases.  At $y=0$, the input satisfying $e=0$ and $x=0$
has output coordinates $(0,0)$, whereas the input satisfying $e=1$ and
$x=-c$ has output coordinates $(1,0)$.  The same identity therefore holds
when the reciprocal argument is zero.  For a fixed $y$, the equation
$\iota(x)=y$ has exactly one solution with $e=0$, and
$a\iota(x+c)=y$ has exactly one solution with $e=1$.  These solutions
produce the two output parities $\rho(y)$ and $1+\rho(y)$, respectively.
By the Chinese remainder theorem, a residue modulo $2p$ is uniquely
determined by its parity and its residue modulo $p$.  Hence every residue
modulo $2p$ is attained exactly once.
\end{proof}

For the differential analysis, write
$\phi_e(x)=a^e\iota(x+ce)$ for $e\in\{0,1\}$.  The residue modulo $p$ of the
output is $\phi_e(x)$, and its parity is $e+\rho(\phi_e(x))$.  The following
lemma gives an explicit relation between two roots arising from an odd input
difference.

\begin{lemma}\label{L:opposite-character-roots}
Assume $\chi(a)=-1$ and put
$R_a(z)=((a-1)z-1)/(z(z+1))$.  If $R_a(z)=u$ has two distinct roots
$z,z'\in\F_p\setminus\{0,-1\}$, then
\begin{equation}
 z'=\frac{z+1}{(a-1)z-1}                              \label{E:second-root}
\end{equation}
and
\begin{equation}
 \chi\bigl(z'(z'+1)\bigr)=-\chi\bigl(z(z+1)\bigr). \label{E:opposite-characters}
\end{equation}
\end{lemma}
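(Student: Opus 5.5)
The plan is to clear denominators in $R_a(z)=u$ and read off the second root from Vieta's formulas. For $z\notin\{0,-1\}$, the equation $R_a(z)=u$ is equivalent to
\begin{equation*}
 uz^2+(u-a+1)z+1=0 .
\end{equation*}
If $u=0$, this equation is linear and has at most one root. Hence the hypothesis of two distinct roots forces $u\ne0$, and $z,z'$ are then exactly the two roots of this quadratic. Vieta's formulas give $zz'=1/u$. Substituting $u=R_a(z)=((a-1)z-1)/(z(z+1))$ yields
\begin{equation*}
 z'=\frac{1}{uz}=\frac{z+1}{(a-1)z-1},
\end{equation*}
which is \eqref{E:second-root}. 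The denominator $(a-1)z-1$ is nonzero, since otherwise $u=0$. The sum relation is not needed for this step.

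Next I compute $z'(z'+1)$ directly from \eqref{E:second-root}. First,
\begin{equation*}
 z'+1=\frac{z+1+(a-1)z-1}{(a-1)z-1}=\frac{az}{(a-1)z-1}.
\end{equation*}
Therefore
\begin{equation*}
 z'(z'+1)=\frac{a\,z(z+1)}{\bigl((a-1)z-1\bigr)^2}.
\end{equation*}
The quadratic character is multiplicative and the denominator is a nonzero square. Hence $\chi(z'(z'+1))=\chi(a)\chi(z(z+1))=-\chi(z(z+1))$, which is \eqref{E:opposite-characters}. This also shows that $z'\notin\{0,-1\}$, as the statement presumes: $z'=0$ would force $z=-1$, and $z'=-1$ would force $az=0$.

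The only point needing care is the degenerate case $u=0$, and I dispose of it at the outset as described. I do not expect any step to be a genuine obstacle; the whole proof is a short algebraic verification.
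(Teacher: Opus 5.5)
Your proposal is correct and follows the paper's proof essentially step for step. Both proofs clear denominators to get $uz^2+(u-a+1)z+1=0$, rule out $u=0$, use the product of roots $zz'=1/u$ to obtain \eqref{E:second-root}, and substitute to get $z'(z'+1)=a\,z(z+1)/((a-1)z-1)^2$, from which $\chi(a)=-1$ gives \eqref{E:opposite-characters}.
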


\begin{proof}
The equation $R_a(z)=u$ is $uz^2+(u-a+1)z+1=0$.  If $u=0$, this equation is
linear because $a\ne1$, and hence it cannot have two distinct roots.  Thus
$u\ne0$, and the product and sum of the roots give $zz'=1/u$.  Moreover,
$((a-1)z-1)=uz(z+1)\ne0$.  Consequently,
\begin{equation*}
 z'=\frac{1}{uz}=\frac{z+1}{(a-1)z-1},
\end{equation*}
which proves \eqref{E:second-root}.  Direct substitution gives
\begin{equation*}
 z'(z'+1)=\frac{a\,z(z+1)}{((a-1)z-1)^2}.
\end{equation*}
Taking quadratic characters and using $\chi(a)=-1$ proves
\eqref{E:opposite-characters}.
\end{proof}

\subsection{The APN property}

Every input difference in $\Z_{2p}$ is uniquely described by its parity
$\varepsilon\in\F_2$ and its residue $h\in\F_p$.  If an input has coordinates
$(e,x)$, then adding this difference gives $(e+\varepsilon,x+h)$, where the
first coordinate is reduced modulo $2$.  Similarly, an output difference has
coordinates $(\eta,b)\in\F_2\times\F_p$.  Consequently, a DDT entry is fixed
by specifying both $\eta$ and $b$.  We first consider input differences with
$\varepsilon=0$ and then those with $\varepsilon=1$.

\begin{lemma}\label{L:reciprocal-even}
Assume that $ \chi(a)=\chi(-3)=\chi(1-4a)=\chi(1-4/a)=-1$.
For every nonzero input difference with parity component zero, every DDT entry
of $\mathcal B_{a,c,p}$ is at most two.
\end{lemma}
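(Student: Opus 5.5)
Fix a nonzero input difference $(0,h)$ with $h\in\F_p^*$ and an output difference $(\eta,b)$. An input $(e,x)$ is moved to $(e,x+h)$, so its parity $e$ does not change. Using $\phi_e(x)=a^e\iota(x+ce)$, the residue condition becomes
\[
\iota(y+h)-\iota(y)=\beta_e,\qquad y=x+ce,\quad \beta_0=b,\quad \beta_1=b/a .
\]
Because the output parity is $e+\rho(\phi_e(\cdot))$, the $e$ cancels and the parity condition becomes $\eta\equiv\rho(\phi_e(x+h))-\rho(\phi_e(x))\pmod2$. The map $x\mapsto y$ is a bijection for each $e$. So I will count solutions of the reciprocal difference equation for $e=0$ and for $e=1$ separately, and then use the parity $\eta$ to show that the two parities never contribute more than two solutions in total. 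The case $b=0$ gives $\DDT=0$ by \cref{P:reciprocal-permutation}, so I assume $b\ne0$.

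The first step analyses $D(y)=\iota(y+h)-\iota(y)=\beta$ with $\beta\neq0$.
\begin{itemize}
\item For $y\notin\{0,-h\}$ we have $D(y)=-h/(y(y+h))$. The equation is then the quadratic $y^2+hy+h/\beta=0$, with discriminant $h^2(1-4/(h\beta))$. Its roots come in pairs $y,\,-h-y$ with the common value $y(y+h)=-h/\beta$.
\item The points $y=0$ and $y=-h$ both give $D=1/h$.
\item When $\beta=1/h$, the quadratic is $y^2+hy+h^2$, whose discriminant $-3h^2$ is a nonsquare because $\chi(-3)=-1$. So $\beta=1/h$ has exactly the two solutions $y\in\{0,-h\}$.
\end{itemize}
Every other $\beta$ has at most two solutions. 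Hence each parity $e$ contributes at most two solutions, and a DDT entry can exceed two only if both parities contribute.

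Next I compute the output parity. For a generic root with $\phi_e$ values nonzero, $\rho(\phi_e(x+h))-\rho(\phi_e(x))$ is odd exactly when $\chi\bigl(\phi_e(x)\phi_e(x+h)\bigr)=-1$. For $e=0$ this product is $1/(y(y+h))$, with character $\chi(-h/\beta_0)=\chi(-hb)$. For $e=1$ it is $a^2/(y(y+h))$, with character $\chi(-h\beta_1)=\chi(-hb/a)=-\chi(-hb)$, since $\chi(a)=-1$. Thus, whenever both parities contribute generic roots for the same $b$, the two groups realise opposite values of $\eta$. Each fixed $(\eta,b)$ therefore collects at most the two roots of one quadratic.

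The main obstacle is the mixed case, where one parity is at the special value. Take $b=1/h$ for $e=0$, and similarly $b=a/h$ for $e=1$. Then that parity contributes the two points $y=0,-h$, whose parities are $\rho(h)$ and $\rho(-h)$, or the analogous expressions multiplied by $a$. The other parity has $\beta=1/(ah)$ or $a/h$, so its quadratic has discriminant $h^2(1-4a)$ or $h^2(1-4/a)$.

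Here I will invoke the hypotheses $\chi(1-4a)=\chi(1-4/a)=-1$: these discriminants are nonsquares, so the other parity contributes no generic roots. It could still hit a special point, but that would force $a=1$, which is impossible. I expect this bookkeeping to be the delicate part, in particular checking exactly which discriminant $1-4a$ or $1-4/a$ arises in each of the two mixed cases.

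Combining the cases gives $\DDT_{\mathcal B_{a,c,p}}\bigl((0,h),(\eta,b)\bigr)\le2$ for every $h\ne0$.
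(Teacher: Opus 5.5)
Your proposal is correct and follows essentially the paper's argument. For each input parity the residue condition becomes a quadratic, and the generic roots of the two parities carry opposite output parities because $\chi(a)=-1$. The value $b=a^e/h$ produced by the exceptional points $y\in\{0,-h\}$ admits no generic root of the same parity (discriminant $-3$) and no solution of the other parity (discriminants $1-4a$ and $1-4/a$, together with $a\ne1$). The only difference is that you work with the unnormalized variable $y$ instead of the paper's substitution $X=hz$.

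One minor slip: the entry with $b=0$ and $\eta=1$ is not covered by the permutation property of $\mathcal B_{a,c,p}$ itself. It still vanishes, because the difference $(0,h)$ preserves parity and each $\phi_e$ is injective.
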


\begin{proof}
 Let $d\in\Z_{2p}$ and $v\in\Z_{2p}$ have coordinates $(0,h)$ and
 $(\eta,b)$, respectively, where $h\ne0$.  According to the definition of
 differential uniformity in \cref{D:APN}, we need to prove that the differential equation
 $\mathcal B_{a,c,p}(z+d)-\mathcal B_{a,c,p}(z)=v$ has at most two solutions
 $z\in\Z_{2p}$ for every $h\in\F_p^*$ and $(\eta,b)\in\F_2\times\F_p$.
 Writing the input as $(e,x)$, this equation is equivalent to
 \begin{equation*}
  \begin{cases}
   \phi_e(x+h)-\phi_e(x)=b,\\
   \rho\bigl(\phi_e(x+h)\bigr)+\rho\bigl(\phi_e(x)\bigr)
      \equiv\eta\pmod2.
  \end{cases}
 \end{equation*}
 We first consider the general case in which both reciprocal arguments are
 nonzero, and then the special case in which one of them is zero.

 For an input of parity $e$, put $X=x+ec$.  If $X\ne0,-h$, the output
 residue difference is
 \begin{equation*}
 b=a^e\iota(X+h)-a^e\iota(X)
   =-\frac{a^eh}{X(X+h)}.
\end{equation*}
After the normalization $X=hz$, this identity becomes
\begin{equation}
 bh=-\frac{a^e}{z(z+1)}.                              \label{E:even-normalized}
\end{equation}
Equivalently, we have
$bhz^2+bhz+a^e=0$.  If $b=0$, this equation has no solution; if $b\ne0$, it
is quadratic.  Thus, for each fixed $e$, at most two solutions have both
$X\ne0$ and $X+h\ne0$.
For nonzero $r_0,r_1\in\F_p$, the definition of $\rho$ gives
\begin{equation*}
 \rho(r_1)+\rho(r_0)
 \equiv\frac{1-\chi(r_0r_1)}{2}\pmod2,
\end{equation*}
because the left-hand side is one precisely when $r_0$ and $r_1$ have
opposite quadratic characters.  Applying this identity to the two output
residues gives the parity component $\eta$
\begin{equation*}
 \begin{split}
 \eta&=\rho\bigl(a^e\iota(X+h)\bigr)
      +\rho\bigl(a^e\iota(X)\bigr)\pmod2\\
 &=\frac{1-\chi\bigl(a^{2e}/(X(X+h))\bigr)}{2}
  =\frac{1-\chi(X(X+h))}{2}
  =\frac{1-\chi(z(z+1))}{2}.
 \end{split}
\end{equation*}
Here $a^{2e}$ and $h^2$ are squares.  For the same $b$ and $h$,
\eqref{E:even-normalized} gives
$z_e(z_e+1)=-a^e/(bh)$.  Since $a$ is a nonsquare, the equations for $e=0$
and $e=1$ give opposite values of $\eta$.  Therefore, after $(\eta,b)$ is fixed, solutions
with nonzero denominators can occur for at most one value of $e$, and there
are at most two such solutions.

We now consider the exceptional values $X=0$ and $X=-h$, for which one of
the reciprocal arguments is zero.
For a fixed $e$, the two values $X=0,-h$ both give the output residue
 difference $b=a^e/h$ because the completed reciprocal equals zero at the
 zero argument.  Their parity components are $\eta_0=\rho(a^e/h)$ and
 $\eta_{-h}=\rho(-a^e/h)$, respectively. They are equal when
 $\chi(-1)=1$ and different when
 $\chi(-1)=-1$. Consequently, for a fixed pair $(\eta,b)$, either one or
both of the values $X=0,-h$ may contribute, and their total contribution is
at most two. Since $a\ne1$, the value $b=a^e/h$ cannot arise from a zero
denominator for inputs of parity $1-e$.
For this value of $b$, \eqref{E:even-normalized} gives
$z(z+1)=-1$ for inputs of parity $e$. For inputs of parity $1-e$, it gives
$z(z+1)=-a$ when $e=0$ and $z(z+1)=-a^{-1}$ when $e=1$. Thus a
nonzero-denominator solution of parity $e$ would satisfy
$z^2+z+1=0$, whereas one of parity $1-e$ would satisfy
$z^2+z+a=0$ when $e=0$ and $z^2+z+a^{-1}=0$ when $e=1$.
 The corresponding discriminants are $-3$, $1-4a$, and $1-4/a$,
 respectively. They are all nonsquares by hypothesis, so none of these
 equations has a solution in $\F_p$.

 Combining the nonzero- and zero-denominator cases, we conclude that a fixed
pair $(\eta,b)$ receives at most two solutions. If $b$ is not of the form
$a^e/h$, all contributing solutions have nonzero denominators, and the
preceding quadratic argument gives at most two. If $b=a^e/h$ for some $e$,
only the inputs with $X=0$ or $X=-h$ can contribute, and there are at most
two of them.
\end{proof}

\begin{lemma}\label{L:reciprocal-odd}
 Assume that $\chi(a)=\chi(1-4a)=\chi(1-4/a)=-1$.  For every input difference
 with parity component one, every DDT entry of $\mathcal B_{a,c,p}$ is at most
 two.
\end{lemma}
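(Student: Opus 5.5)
The plan is to mirror the proof of \cref{L:reciprocal-even}, working in coordinates. Let the input difference be $(1,h)$ with $h\in\F_p$ arbitrary, and let the output difference be $(\eta,b)$. An input $(e,x)$ is sent to $(1-e,x+h)$. The difference equation then becomes the system
\begin{equation*}
 \phi_{1-e}(x+h)-\phi_e(x)=b,\qquad 1+\rho\bigl(\phi_{1-e}(x+h)\bigr)+\rho\bigl(\phi_e(x)\bigr)\equiv\eta\pmod 2 .
\end{equation*}
Since $\phi_e(x)=a^e\iota(x+ce)$, I will treat the cases $e=0$ and $e=1$ separately. In each case I first handle inputs where both reciprocal arguments are nonzero, and then the exceptional inputs.

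\emph{Generic inputs.} For $e=0$ put $X=x$ and $s=h+c$. The residue equation is
\begin{equation*}
 \frac{a}{X+s}-\frac1X=\frac{(a-1)X-s}{X(X+s)}=b .
\end{equation*}
If $s\neq0$, the normalization $X=sz$ gives $R_a(z)=bs$, with $R_a$ as in \cref{L:opposite-character-roots}. If $s=0$, the equation is linear and has at most one root.

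For $e=1$ put $W=x+c$ and $t=h-c$. The equation is $1/(W+t)-a/W=b$. Substituting $W=-at\,w/(\ldots)$, or more simply viewing it as the inverse relation (interchange the roles of the two arguments and multiply by $a^{-1}$), I expect to bring it to the form $R_{a}(z)=u$ or $R_{a^{-1}}(z)=u$. Note that $\chi(a^{-1})=-1$ as well, so \cref{L:opposite-character-roots} still applies. Again the equation is linear when $t=0$.

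For nonzero outputs $r_0,r_1$, the parity identity $\rho(r_0)+\rho(r_1)\equiv(1-\chi(r_0r_1))/2$ gives
\begin{equation*}
 \eta=1+\frac{1-\chi(r_0r_1)}{2},
\end{equation*}
and here $\chi(r_0r_1)=\chi(a)\,\chi\bigl(z(z+1)\bigr)=-\chi\bigl(z(z+1)\bigr)$. By \eqref{E:opposite-characters}, two distinct roots of the same quadratic $R_a(z)=u$ have opposite values of $\chi(z(z+1))$, and hence opposite values of $\eta$. Consequently, for fixed $(\eta,b)$ and fixed $e$, at most one generic solution exists. Over both parities this gives at most two generic solutions.

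\emph{Exceptional inputs.} These are the inputs for which some reciprocal argument vanishes: $X=0$ or $X=-s$ when $e=0$, and $W=0$ or $W=-t$ when $e=1$. At each such input, $b$ equals one of the explicit values $\pm a^{\pm1}/s$, $\pm1/s$ (and the analogues with $t$), and the output parity is determined. I will list these four inputs and their $(\eta,b)$ pairs. I then show that whenever $b$ takes such an exceptional value, no generic solution of either parity can coexist with it. Plugging the exceptional $b$ into the normalized equation should yield $z^2+z+a=0$ or $z^2+z+a^{-1}=0$, up to the substitution $z\mapsto -1-z$. These have discriminants $1-4a$ and $1-4/a$, which are nonsquares by hypothesis, so there are no roots. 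This explains why $\chi(-3)$ is not needed here.

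It remains to check that at most two exceptional inputs share the same $(\eta,b)$. The values $a/s$, $-1/s$, $1/t$ and $-a/t$ coincide only under relations such as $t=-as$ or $t=s/a$. When they do coincide, I will compare parities via $\chi(a)=-1$ to show that at most two match, and also that in this situation generic solutions are again excluded by the discriminant conditions. The degenerate cases $s=0$ or $t=0$, where $h=\mp c$ and there is one exceptional input plus a linear equation, will be handled directly.

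\emph{Main obstacle.} I expect this exceptional bookkeeping to be the hardest step, in particular the bound of two when an exceptional input coexists with a generic solution of the \emph{other} parity. My first attempt is to reduce each such coexistence to one of the quadratics $z^2+z+a$ or $z^2+z+a^{-1}$. If that reduction fails in some subcase, I will fall back on the parity comparison using $\chi(a)=-1$ to show that the two solutions have different $\eta$.
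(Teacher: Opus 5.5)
Your generic analysis is sound. For $e=1$, the normalization $W=tz$ gives $R_{a^{-1}}(z)=bt/a$, equivalently $R_a(-z-1)=bt$, and $\chi(a^{-1})=-1$. The exceptional bookkeeping, however, rests on a false claim. You propose to show that an exceptional value of $b$ rules out generic solutions of \emph{either} parity. Your fallback is that an exceptional and a generic solution must have different $\eta$. Both hold for the same initial parity, but both fail across parities. Substituting the even exceptional value $b=a/s$ into the odd generic equation gives $R_a(-z-1)=at/s$, which depends on the free ratio $t/s$ and is not one of the forbidden quadratics.

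Concretely, take $p=11$, $a=7$, $c=1$. Here $1-4a\equiv6$ and $1-4/a\equiv2$, so all three characters equal $-1$. Take the input difference $13$, i.e.\ coordinates $(1,2)$. The even input $z=0$ has $X=0$, while the odd input $z=1$ has both reciprocal arguments nonzero. Yet $\mathcal B_{7,1,11}(13)-\mathcal B_{7,1,11}(0)=17$ and $\mathcal B_{7,1,11}(14)-\mathcal B_{7,1,11}(1)=15-20\equiv17\pmod{22}$. So an exceptional and a generic solution with the same $(\eta,b)$ can coexist, and neither remedy can succeed. Also, the exceptional values are $a/s$, $1/s$, $1/t$, $a/t$, not $-1/s$ and $-a/t$.

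The missing idea is that this coexistence is harmless if you count per initial parity rather than per type. Fix the initial parity $e$ and the pair $(\eta,b)$.
\begin{itemize}
\item The two exceptional inputs of parity $e$ have normalized output residues $a$ and $1$, which differ because $a\ne1$.
\item A generic root of the same parity cannot produce either value. The equations $R_a(z)=a$ and $R_a(z)=1$ read $az^2+z+1=0$ and $z^2+(2-a)z+1=0$, with nonsquare discriminants $1-4a$ and $a^2(1-4/a)$. The parity-one case is identical after $z\mapsto-z-1$.
\item \cref{L:opposite-character-roots} leaves at most one generic root with the prescribed $\eta$.
\item If $s=0$ or $t=0$, the residue difference $(a-1)\iota(X)$ or $(1-a)\iota(X)$ is a bijection, so again at most one solution arises.
\end{itemize}
Hence each initial parity contributes at most one solution, and summing the two parities gives the bound two directly. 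You then never need to compare exceptional values across parities or exclude cross-parity coincidences; this is exactly how the paper concludes.
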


\begin{proof}
 Let $d\in\Z_{2p}$ and $v\in\Z_{2p}$ have coordinates $(1,h)$ and
 $(\eta,b)$, respectively.  According to the definition of differential
 uniformity in \cref{D:APN}, we need to prove that the differential equation
 $\mathcal B_{a,c,p}(z+d)-\mathcal B_{a,c,p}(z)=v$ has at most two solutions
 $z\in\Z_{2p}$ for every $h\in\F_p$ and
 $(\eta,b)\in\F_2\times\F_p$.  Writing the input as $(e,x)$, we obtain the
 following two systems:
 \begin{equation*}
  \begin{aligned}
   e=0:&\quad
   \begin{cases}
    a\iota(x+h+c)-\iota(x)=b,\\
    1+\rho\bigl(a\iota(x+h+c)\bigr)+\rho\bigl(\iota(x)\bigr)
       \equiv\eta\pmod2;
   \end{cases}\\[2mm]
   e=1:&\quad
   \begin{cases}
    \iota(x+h)-a\iota(x+c)=b,\\
    1+\rho\bigl(\iota(x+h)\bigr)+\rho\bigl(a\iota(x+c)\bigr)
       \equiv\eta\pmod2.
   \end{cases}
  \end{aligned}
 \end{equation*}
 Put $s=h+c$ and $t=h-c$.  We first consider the general case in which
 $s,t\ne0$ and both reciprocal arguments are nonzero.  We then consider zero
 reciprocal arguments and the degenerate cases $s=0$ or $t=0$.

 For an initial input of parity zero, its parity becomes one, the field
 coordinate changes from $X=x$ to $X+s$, and the output residue difference is
 $b=a\iota(X+s)-\iota(X)$.  If $s\ne0$, normalization by $X=sz$ gives
\begin{equation}
 bs=R_a(z)=\frac{(a-1)z-1}{z(z+1)}.                  \label{E:odd-forward}
\end{equation}
 For an initial input of parity one, its parity becomes zero,
the field coordinate changes from $X=x+c$ to $X+t$, and the output residue
difference is
$b=\iota(X+t)-a\iota(X)$.  If $t\ne0$, normalization by $X=tz$ gives
$bt=((1-a)z-a)/(z(z+1))=R_a(-z-1)$.
 We now determine which roots of these normalized equations can have a
prescribed output parity.
Whenever both reciprocal arguments are nonzero, the parity component of the
output difference is
\begin{equation*}
 \eta=\frac{1-\chi(z(z+1))}{2}\pmod2.
\end{equation*}
When the initial input has parity zero, the two nonzero output residues have product
$a/(X(X+s))$.  Since $X=sz$ and $\chi(a)=-1$,
\begin{equation*}
 \chi\left(\frac{a}{X(X+s)}\right)
 =-\chi(z(z+1)).
\end{equation*}
The change of input parity contributes one to the output parity difference.
Therefore
\begin{align*}
 \eta
 &\equiv1+\rho\bigl(a\iota(X+s)\bigr)+\rho\bigl(\iota(X)\bigr)\pmod2\\
 &\equiv1+\frac{1+\chi(z(z+1))}{2}
  \equiv\frac{1-\chi(z(z+1))}{2}\pmod2.
\end{align*}
When the initial input has parity one, the product of the two output residues
is $a/(X(X+t))$, and the same calculation with $X=tz$ gives the identical
formula.
By \cref{L:opposite-character-roots}, two roots of either normalized equation
have opposite values of $\eta$.  After denominators are cleared, each
normalized equation has degree at most two, and hence has at most two roots;
if it has two, only one can match the prescribed output parity.  Hence a
fixed pair $(\eta,b)$ is obtained from at most one root for each initial input
parity.

It remains to treat the exceptional values at which one of the reciprocal
arguments is zero. For an initial input of parity zero, the values $X=0$ and $X=-s$ give the
distinct normalized output residues $a$ and $1$, respectively. A
nonzero-denominator solution with normalized output residue $a$ would have
to satisfy $R_a(z)=a$, or equivalently $az^2+z+1=0$. Similarly, normalized
output residue $1$ would require $R_a(z)=1$, or equivalently
$z^2+(2-a)z+1=0$. The discriminants of these two equations are $1-4a$ and
$a(a-4)=a^2(1-4/a)$, respectively, and both are nonsquares by hypothesis.
For an initial input of parity one,
the same two equations are obtained after replacing $z$ by $-z-1$; the
values $X=0,-t$ give the normalized residues $1$ and $a$, respectively.
Thus, for either initial input parity and a fixed pair $(\eta,b)$, at most one
solution can occur: it has either two nonzero reciprocal arguments or one
zero reciprocal argument, but not both.  The two possible initial parities
therefore contribute at most two solutions in total.

Finally, we consider the degenerate cases $s=0$ or $t=0$, for which the
preceding normalizations are not available.
If $s=0$, the output residue difference for an initial input of parity zero
is $(a-1)\iota(X)$.  Since $a\ne1$ and $\iota$ is a permutation, each
prescribed residue has exactly one preimage.  The same conclusion holds for
an initial input of parity one when $t=0$, because the output residue
difference is then $(1-a)\iota(X)$.  If $s=0$ and $c\ne0$, then
$t=h-c=-2c\ne0$, so the preceding argument shows that inputs of parity one
contribute at most one solution.  If $s=0$ and $c=0$, then $h=0$ and $t=0$,
so each initial parity contributes at most one solution.  The argument is
symmetric when $t=0$.
Combining the two initial parities and all the exceptional cases, we
conclude that every prescribed pair $(\eta,b)$ receives at most two
solutions.
\end{proof}

\begin{theorem}\label{T:reciprocal-apn}
The function $\mathcal B_{a,c,p}$ in \eqref{E:reciprocal-map} is an APN permutation of
$\Z_{2p}$ if and only if
\begin{equation}
 \chi(a)=\chi(-3)=\chi(1-4a)=\chi(1-4/a)=-1.       \label{E:inverse-condition}
\end{equation}
In particular, the APN property is independent of the translation parameter
$c$.
\end{theorem}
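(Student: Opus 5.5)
The plan is to prove the two directions separately. Sufficiency will come from the results already established. Necessity will come from exhibiting a collision or a DDT entry at least three whenever one of the four conditions in \eqref{E:inverse-condition} fails. The translation parameter $c$ plays no role in either direction, which gives the final claim.

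\emph{Sufficiency.} Assume \eqref{E:inverse-condition}. By \cref{P:reciprocal-permutation}, $\mathcal B_{a,c,p}$ is a permutation, since $\chi(a)=-1$. Every nonzero input difference in $\Z_{2p}$ has parity component $0$ with $h\ne0$, or parity component $1$. \Cref{L:reciprocal-even} bounds every DDT entry in the first case by two. \Cref{L:reciprocal-odd} does the same in the second case, and its hypotheses are a subset of \eqref{E:inverse-condition}. Together with the general lower bound $\du\ge2$ for permutations, this gives $\du(\mathcal B_{a,c,p})=2$.

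\emph{Necessity when $\chi(a)=1$.} Here I show that $\mathcal B_{a,c,p}$ is not even a permutation. Take $y\ne0$ and let $(0,x_0)$ and $(1,x_1)$ be the inputs with $\iota(x_0)=y$ and $a\iota(x_1+c)=y$.
\begin{itemize}
\item For the first input, the output parity is $\rho(y)$.
\item For the second input, $\chi(x_1+c)^2=1$, so $R_{c,p}=\rho(x_1+c)=\rho(a^{-1}y)=\rho(y)$ because $a$ is a square.
\end{itemize}
The two distinct inputs therefore share both output coordinates. So from now on I assume $\chi(a)=-1$, which makes $\mathcal B_{a,c,p}$ a permutation. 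The remaining cases have to produce a DDT entry of size at least three, using only even input differences $(0,h)$. I reuse the computations in the proof of \cref{L:reciprocal-even}, where the exceptional inputs $X=0$ and $X=-h$ of parity $e$ give $b=a^e/h$.

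\emph{Necessity when $\chi(-3)=1$.} Fix $e=0$ and $b=1/h$. Since $-3\ne0$ for $p\ge11$, the equation $z^2+z+1=0$ has two distinct roots $z\ne0,-1$. They give two parity-$0$ solutions $X=hz$ with nonzero denominators. Both have the same parity component $\eta=(1-\chi(-1))/2$, since $z(z+1)=-1$ for each. The exceptional input $X=0$ also gives residue difference $b$, with parity $\rho(1/h)=\rho(h)$. I then choose $h\in\F_p^*$ in the appropriate square class so that $\rho(h)=(1-\chi(-1))/2$. This is always possible because both square classes are nonempty. The pair $(\eta,b)$ then receives at least three solutions.

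\emph{Necessity when $\chi(1-4a)=1$ or $\chi(1-4/a)=1$.} The argument is the same. If $\chi(1-4a)=1$, take $e=0$ and $b=1/h$. The two distinct roots of $z^2+z+a=0$ are nonzero and different from $-1$, since $a\ne0$. They give two parity-$1$ inputs with $z(z+1)=-a$, hence a common $\eta=(1-\chi(-a))/2$. The exceptional parity-$0$ input $X=0$ has $\eta=\rho(h)$, and $h$ is chosen to match. If $\chi(1-4/a)=1$, take $e=1$ and $b=a/h$, and use the roots of $z^2+z+a^{-1}=0$ for parity-$0$ inputs. The exceptional parity-$1$ input then has $\eta=\rho(a/h)$, which again can be matched by the choice of $h$.

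The step I expect to need the most care is the degenerate situation where a discriminant vanishes, so the quadratic has only a double root. This does not arise: $-3\ne0$, while $1-4a=0$ or $1-4/a=0$ forces $a=1/4$ or $a=4$, both squares, which the case $\chi(a)=1$ already covers. The other point to verify is that in each case the two roots and the exceptional input are genuinely distinct elements of $\Z_{2p}$. This follows from the Chinese remainder identification of $\Z_{2p}$ with pairs (parity, residue modulo $p$), since the inputs differ in parity or in residue.
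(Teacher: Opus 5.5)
Your proposal is correct and follows the paper's proof essentially step for step. Sufficiency comes from \cref{P:reciprocal-permutation,L:reciprocal-even,L:reciprocal-odd}. When $\chi(a)=1$, the two preimages of a nonzero residue $y$ collide. For each failed discriminant, a DDT entry with even input difference receives the exceptional input $X=0$ together with the two roots of the corresponding quadratic, with the square class of $h$ chosen so the parities match. The only difference is presentational: you treat the three discriminant cases one at a time, whereas the paper packages them into \cref{Tab:reciprocal-discriminants} with the single choice $\chi(k)=-\chi(\kappa)\chi(a)^e$, which is exactly what your three choices of $h$ amount to.
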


\begin{proof}
 According to the definition of differential uniformity in \cref{D:APN},
 after establishing the permutation property, we need to determine when the differential equation
 $\mathcal B_{a,c,p}(z+d)-\mathcal B_{a,c,p}(z)=v$ has at most two solutions
 $z\in\Z_{2p}$ for every $d\in\Z_{2p}\setminus\{0\}$ and
 $v\in\Z_{2p}$.  We first determine when $\mathcal B_{a,c,p}$ is a
 permutation.  We then prove the differential bound in the general cases
 where the parity component of $d$ is zero or one.  Finally, for each failed
 discriminant condition, we construct a DDT entry with at least three
 solutions.

We first consider the permutation property. If $\chi(a)=1$, fix a nonzero output residue $y$.  The unique inputs in the
cases $e=0$ and $e=1$ that give residue $y$ satisfy $x=y^{-1}$ and
$x+c=a/y$, respectively.  When $e=0$, inversion preserves quadratic
character, so the output parity is $\rho(x)=\rho(y)$.  When $e=1$,
$x+c\ne0$ and the correction term $1-\chi(x+c)^2$ vanishes.  Moreover,
$\chi(x+c)=\chi(a)\chi(y)=\chi(y)$, so its output parity is again $\rho(y)$.
The two inputs have different parities, whereas their outputs have the same
parity $\rho(y)$ and the same residue $y$ modulo $p$.  By the Chinese
remainder theorem, the two outputs coincide modulo $2p$, and hence
\eqref{E:reciprocal-map} is not a permutation.  Therefore $\chi(a)=-1$ is
necessary; under this assumption,
\cref{P:reciprocal-permutation} gives a permutation.

 We next prove the sufficiency of the four character conditions. If all four characters in \eqref{E:inverse-condition} are $-1$, the even and
odd bounds in \cref{L:reciprocal-even,L:reciprocal-odd} show that every DDT
entry with a nonzero input difference has multiplicity at most two: a nonzero
difference is either $(0,h)$ with $h\ne0$, or $(1,h)$ with arbitrary $h$.
Since a permutation has differential uniformity at least two, $\mathcal B_{a,c,p}$ is APN.

 It remains to consider the special cases in which one of the three
 discriminant conditions fails and thereby prove their necessity.
Assume $\chi(a)=-1$. The three remaining quantities in
\eqref{E:inverse-condition} are nonzero. Indeed, $p>3$, while
$1-4a=0$ or $1-4/a=0$ would imply $a=1/4$ or $a=4$, respectively, both
of which are squares. Suppose, contrary to the desired conclusion, that
one of these three quantities is a square.
 The corresponding discriminants occur in the equations derived in
\cref{L:reciprocal-even}. The relevant parity data and quadratic equations
are summarized in \cref{Tab:reciprocal-discriminants}.
\begin{table}[htbp]
\centering
\caption{Parity data for the three discriminant cases}
\label{Tab:reciprocal-discriminants}
\footnotesize
\begin{tabular}{|c|c|c|c|c|}
\hline
\text{Discriminant}
& \shortstack{\text{Input parity}\\\text{when }$X=0$}
& \shortstack{\text{Input parity}\\\text{for the two roots}}
& \shortstack{\text{Quadratic}\\\text{equation}}
& \shortstack{\text{Value of}\\$\kappa=z(z+1)$ }
\\ \hline
$-3$    & $0$ & $0$ & $z^2+z+1=0$      & $-1$      \\ \hline
$1-4a$  & $0$ & $1$ & $z^2+z+a=0$      & $-a$      \\ \hline
$1-4/a$ & $1$ & $0$ & $z^2+z+a^{-1}=0$ & $-a^{-1}$ \\
\hline
\end{tabular}
\end{table}
We now construct three inputs contributing to the same DDT entry.
Fix the row whose discriminant is a square.  Let $e$ and $e'$ be the input
parities in the second and third columns, respectively, and let
$\kappa=z(z+1)$ be the value in the last column.
Choose $k\in\F_p^*$ such that
$\chi(k)=-\chi(\kappa)\chi(a)^e$. Such a $k$ exists because each of the
two character values $1$ and $-1$ is attained by exactly $(p-1)/2$
elements of $\F_p^*$.
Consider the DDT
entry with input difference $(0,k)$, output residue difference $b=a^e/k$,
and output parity difference $\eta=(1-\chi(\kappa))/2$.

To verify this claim, first consider the exceptional value $X=0$ with
input parity $e$. It gives the first solution: its output residue
difference is $b$, while its output parity difference is
$\rho(a^e/k)=(1+\chi(a)^e\chi(k))/2=\eta$, where we have used
$\chi(k^{-1})=\chi(k)$.
The other two solutions arise from the quadratic equation in the selected
row of \cref{Tab:reciprocal-discriminants}. Since its discriminant is a
nonzero square, this equation has two distinct roots. Moreover, both roots
lie outside $\{0,-1\}$ because the equation has the form
$z^2+z+C=0$ with $C\ne0$. For either root $z$, set $X=kz$ and take the
input parity to be $e'$. Equation \eqref{E:even-normalized} then gives
$bk=-a^{e'}/(z(z+1))$. Since every row of
\cref{Tab:reciprocal-discriminants} satisfies
$-a^{e'}/\kappa=a^e$, where $\kappa=z(z+1)$, both roots yield the output
residue difference $b=a^e/k$. Their output parity difference is also
$(1-\chi(z(z+1)))/2=\eta$.
The two roots give distinct nonzero values of $X$. If $e=e'$, these values
are distinct from the exceptional value $X=0$; if $e\ne e'$, the
corresponding inputs are already distinguished by their parities.
Consequently, the exceptional solution $X=0$ and the two quadratic roots
give three distinct inputs contributing to the same DDT entry. This entry
therefore has multiplicity at least three, contradicting the APN property.
Hence all three discriminants must be nonsquares.

 Combining the sufficiency and necessity arguments proves the stated
 criterion.  Finally, we verify that it is independent of the translation
parameter $c$. Although $c$ occurs in the substitutions
$X=x+ec$, $s=h+c$, and $t=h-c$, it disappears from the normalized
equations and their discriminants. Hence the character conditions in
\eqref{E:inverse-condition} do not depend on $c$. This completes the proof.
\end{proof}

\subsection{Existence and parameter count}

Define $ \cA_p=\{a\in\F_p^*: \chi(a)=\chi(1-4a)=\chi(1-4/a)=-1\}$.
By \cref{T:reciprocal-apn}, the elements of this set give APN permutations precisely
when the additional condition $\chi(-3)=-1$, or equivalently
$p\equiv5\pmod6$, holds.

\begin{theorem}\label{T:parameter-count}
Let $T_p=\sum_{z\in\F_p}\chi(z(z-4)(1-4z))$.  Then we have
\begin{equation}
 |\cA_p|=\frac{p+1+T_p}{8},\qquad |T_p|\le2\sqrt p. \label{E:Ap-count}
\end{equation}
In particular, $|\cA_p|\ge(p+1-2\sqrt p)/8>0$ for $p\ge11$.
\end{theorem}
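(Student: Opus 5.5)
The plan is to write the membership condition for $\cA_p$ as a product of character factors and sum it over $a$. First I replace the third condition by one that is polynomial in $a$. For $a\ne0$ we have $1-4/a=(a-4)/a=a(a-4)/a^2$, so $\chi(1-4/a)=\chi(a(a-4))$. Put
\begin{equation*}
 x_a=\chi(a),\qquad y_a=\chi(1-4a),\qquad w_a=\chi\bigl(a(a-4)\bigr).
\end{equation*}
For every $a\in\F_p$, the indicator of the event $x_a=y_a=w_a=-1$ should be $\tfrac18(1-x_a)(1-y_a)(1-w_a)$, provided the three excluded points $a=0,1/4,4$ contribute zero. At $a=0$, the factor $1-y_a$ vanishes since $\chi(1)=1$. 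At $a=1/4$ and at $a=4$, the factor $1-x_a$ vanishes since $1/4$ and $4$ are squares. Hence
\begin{equation*}
 8|\cA_p|=\sum_{a\in\F_p}(1-x_a)(1-y_a)(1-w_a).
\end{equation*}

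Next I expand the product as $1-x-y-w+xy+xw+yw-xyw$ and evaluate each sum.
\begin{itemize}
\item The constant term gives $p$.
\item The linear sums $\sum_a\chi(a)$ and $\sum_a\chi(1-4a)$ vanish.
\item By \cref{L:quadratic-character-sum}, $\sum_a\chi(a(a-4))=-1$.
\item By the same lemma with $\lambda=-4$, $\sum_a\chi(a(1-4a))=-\chi(-4)=-\chi(-1)$.
\item The term $xw=\chi(a^2(a-4))$ equals $\chi(a-4)$ except at $a=0$, where it is $0$. So its sum is $0-\chi(-4)=-\chi(-1)$.
\item The term $xyw=\chi(a^2(1-4a)(a-4))$ agrees with $\chi((1-4a)(a-4))$ except at $a=0$. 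Using the lemma with roots $1/4\ne4$, which holds since $p\nmid15$, its sum is $-\chi(-1)-\chi(-1)=-2\chi(-1)$.
\item The term $yw=\chi(a(a-4)(1-4a))$ sums to exactly $T_p$.
\end{itemize}
Collecting these,
\begin{equation*}
 8|\cA_p|=p+1-\chi(-1)-\chi(-1)+T_p+2\chi(-1)=p+1+T_p,
\end{equation*}
which gives the first identity in \eqref{E:Ap-count}.

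For the bound on $T_p$, I note that $z(z-4)(1-4z)$ is a cubic whose roots $0,4,1/4$ are pairwise distinct for $p\ge11$. It is therefore square-free, and the Weil estimate \eqref{E:weil} gives $|T_p|\le2\sqrt p$. Finally,
\begin{equation*}
 p+1-2\sqrt p=(\sqrt p-1)^2>0,
\end{equation*}
which yields the positivity claim.

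The main obstacle is bookkeeping at the special points. One must check that the product formula gives $0$ at $a=0,1/4,4$. One must also handle the corrections at $a=0$ in the $xw$ and $xyw$ sums, where $a^2$ was cancelled. These corrections must produce the cancellation of all $\chi(-1)$ terms; I would double-check each sign there.
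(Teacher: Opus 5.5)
Your proposal is correct and is essentially the paper's proof: write the indicator of $\cA_p$ as a product of three $(1\pm\chi)$ factors, check that the special points $0,4,1/4$ contribute zero, expand, kill the linear sums, evaluate the quadratic sums with \cref{L:quadratic-character-sum}, identify the cubic sum as $T_p$, and apply the Weil bound \eqref{E:weil}. The only difference is cosmetic. The paper uses $\chi(a)=-1$ to replace $\chi(1-4/a)=-1$ by $\chi(a-4)=1$, which gives the factor $1+\chi(z-4)$ and needs no corrections. You use $\chi(a(a-4))$ instead, which forces the two $a=0$ corrections you carried out; your signs there are right.
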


\begin{proof}
When $\chi(a)=-1$, the condition $\chi(1-4/a)=-1$ is equivalent to
$\chi(a-4)=1$, because $\chi(1-4/a)=\chi(a-4)\chi(a^{-1})=-\chi(a-4)$.
Away from $0$, $4$, and $1/4$, each of the three factors
$1-\chi(z)$, $1+\chi(z-4)$, and $1-\chi(1-4z)$ is either zero or two, and
their product equals eight exactly when
$\chi(z)=-1$, $\chi(z-4)=1$, and $\chi(1-4z)=-1$.  The three exceptional
elements are distinct, and hence
\begin{equation}
 8|\cA_p|=\sum_{z\in\F_p}
 (1-\chi(z))(1+\chi(z-4))(1-\chi(1-4z)).             \label{E:indicator-sum}
\end{equation}
At each exceptional root, the product on the right is zero, so no correction
term is required: the third factor vanishes at $z=0$, whereas the first factor
vanishes at $z=4$ and $z=1/4$.  With
$A=\chi(z)$, $B=\chi(z-4)$, and $C=\chi(1-4z)$, the product expands as
\begin{equation*}
 (1-A)(1+B)(1-C)=1+B-C-BC-A-AB+AC+ABC.
\end{equation*}
The linear character sums vanish because every nonconstant affine change of variable permutes $\F_p$.
Applying \cref{L:quadratic-character-sum} to the three square-free
quadratics gives $\sum_{z\in\F_p}\chi(z(z-4))=-1$,
$\sum_{z\in\F_p}\chi((z-4)(1-4z))=-\chi(-1)$, and
$\sum_{z\in\F_p}\chi(z(1-4z))=-\chi(-1)$.
With the signs in \eqref{E:indicator-sum}, the last two sums cancel and the
first contributes $1$.  The constant term contributes $p$, and the cubic term
is $T_p$.  Hence $8|\cA_p|=p+1+T_p$, which proves the equality in
\eqref{E:Ap-count}.  The cubic has three distinct roots, so the Weil bound
\eqref{E:weil} gives $|T_p|\le2\sqrt p$.
\end{proof}

\begin{corollary}\label{C:reciprocal-infinite}
If $p\equiv5\pmod6$ and $p>5$, then every $a\in\cA_p$ and $c\in\F_p$ gives an
APN permutation $\mathcal B_{a,c,p}$ of $\Z_{2p}$.  For each such order, there
are exactly $p|\cA_p|=p(p+1+T_p)/8$ distinct permutations within the
parametrized family, and this number is at least
$p(p+1-2\sqrt p)/8$.
\end{corollary}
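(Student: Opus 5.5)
The plan has three parts: obtain the APN property from \cref{T:reciprocal-apn}, prove that distinct parameter pairs give distinct permutations, and read off the count and lower bound from \cref{T:parameter-count}.

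\textbf{APN property.} Since $p>5$ and $p\equiv5\pmod6$, we have $p\ge11$, so \cref{Con:reciprocal} applies. For $p>3$ we also know $\chi(-3)=-1$ exactly when $p\equiv5\pmod6$. Hence every $a\in\cA_p$ satisfies all four conditions in \eqref{E:inverse-condition}. By \cref{T:reciprocal-apn}, $\mathcal B_{a,c,p}$ is then an APN permutation for every $c\in\F_p$. Conversely, the same theorem shows that the APN members of the family are exactly those with $a\in\cA_p$. Their number is therefore $p|\cA_p|$ once we know that different pairs $(a,c)$ give different functions.

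\textbf{Injectivity of $(a,c)\mapsto\mathcal B_{a,c,p}$.} This is the main step. I would recover both parameters from the odd inputs, writing each input by its coordinates $(e,x)$ via the Chinese remainder theorem. For $e=1$, the residue of the output modulo $p$ is $a\iota(x+c)$. This residue is zero exactly at $x=-c$, so the odd input sent to residue $0$ determines $c$. Knowing $c$, evaluate at the odd input with $x=1-c$. Its output residue is $a\iota(1)=a$, which determines $a$.

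To make this rigorous, I would note that by the Chinese remainder theorem $\mathcal B_{a,c,p}(z)\bmod p$ equals $Y_{a,c,p}(z)$. This holds because $(p+1)Y\equiv Y\pmod p$ and $pR\equiv0\pmod p$. The residue data modulo $p$ alone thus separates all pairs $(a,c)$.

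\textbf{Count and bound.} Combining the two steps gives exactly $p|\cA_p|$ distinct APN permutations. Substituting \eqref{E:Ap-count} gives $p|\cA_p|=p(p+1+T_p)/8$. The bound $T_p\ge-2\sqrt p$ from \cref{T:parameter-count} then gives the lower bound $p(p+1-2\sqrt p)/8$, which is positive.

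I expect the only care needed to be in the injectivity step: checking that the reduction modulo $p$ of \eqref{E:reciprocal-map} really is $Y_{a,c,p}(z)$, and that the labelling of odd inputs by $x=z\bmod p$ is a bijection onto $\F_p$. Both follow from $\gcd(2,p)=1$.
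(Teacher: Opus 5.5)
Your proposal is correct and follows essentially the same route as the paper: the APN property and the count come from \cref{T:reciprocal-apn} and \cref{T:parameter-count}, using $\chi(-3)=-1$ for $p\equiv5\pmod6$. Injectivity is argued as in the paper, recovering $c$ from the unique odd input with output residue zero and then $a$ from any other odd input, with your $x=1-c$ being one such input. Your explicit checks that $p>5$ forces $p\ge11$ and that $\mathcal B_{a,c,p}(z)\bmod p=Y_{a,c,p}(z)$ spell out details the paper leaves implicit.
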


\begin{proof}
Quadratic reciprocity gives $\chi(-3)=-1$ when $p\equiv5\pmod6$.  The result
follows from \cref{T:reciprocal-apn,T:parameter-count}.  The residue modulo
$p$ of the unique odd input whose output residue is zero equals $-c$.  Hence
the equality $\mathcal B_{a,c,p}=\mathcal B_{a',c',p}$ first gives
$c=c'$.  Evaluating both permutations at any odd input with field coordinate
$x+c\ne0$ then gives $a/(x+c)=a'/(x+c)$, and hence $a=a'$.  Thus the
parameterization is injective and the stated functions are distinct.
\end{proof}

\begin{corollary}\label{C:minus-one}
Let $p\ge11$ be a prime. If $p\equiv23\pmod{60}$ or $p\equiv47\pmod{60}$,
then, for every $c\in\F_p$, the function
$\mathcal B_{-1,c,p}\colon\Z_{2p}\to\Z_{2p}$ defined by
\[
\mathcal B_{-1,c,p}(z)
\equiv pR_{c,p}(z)
 +(p+1)(-1)^z(z+cE(z))^{p-2}
\pmod{2p}
\]
is an APN permutation.
\end{corollary}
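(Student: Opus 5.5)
The plan is to reduce Corollary~\ref{C:minus-one} to Theorem~\ref{T:reciprocal-apn} with $a=-1$. Two things are needed. First, the displayed formula must agree with \eqref{E:reciprocal-map} when $a=-1$. Second, the four character conditions in \eqref{E:inverse-condition} must hold for every prime in the two stated residue classes modulo $60$.

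For the formula, note that $A_{-1}(z)=1-2E(z)=(-1)^z$. This gives
\[
Y_{-1,c,p}(z)=(-1)^zX_c(z)^{p-2}.
\]
Here $X_c(z)$ is the residue of $z+cE(z)$ modulo $p$. Raising to the power $p-2$ and multiplying by $\pm1$ commute with reduction modulo $p$. The factor $p+1$ makes $(p+1)Y$ well defined modulo $2p$ once $Y$ is known modulo $p$, because $p+1\equiv1\pmod p$ and $p+1\equiv0\pmod2$. So the displayed expression is exactly $\mathcal B_{-1,c,p}(z)$ from Construction~\ref{Con:reciprocal}.

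For the characters with $a=-1$, the conditions become $\chi(-1)=\chi(-3)=\chi(5)=-1$, since $1-4a=1-4/a=5$. These are checked one at a time.
\begin{itemize}
\item $\chi(-1)=-1$ is equivalent to $p\equiv3\pmod4$.
\item By the remark after the definition of $\chi$, $\chi(-3)=-1$ is equivalent to $p\equiv5\pmod6$, that is, $p\equiv2\pmod3$ for odd $p$.
\item By quadratic reciprocity, since $5\equiv1\pmod4$, we have $\chi(5)=\left(\frac{p}{5}\right)$. This equals $-1$ exactly when $p\equiv2,3\pmod5$.
\end{itemize}
By the Chinese remainder theorem, the three congruences together determine residue classes modulo $60$. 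The check is routine: $23\equiv3\pmod4$, $23\equiv2\pmod3$, $23\equiv3\pmod5$; and $47\equiv3\pmod4$, $47\equiv2\pmod3$, $47\equiv2\pmod5$. Only this sufficiency direction is needed, so the corollary does not have to list every admissible class.

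The conditions $p\ge11$ and $\chi(a)=-1$ then make Theorem~\ref{T:reciprocal-apn} applicable for every $c\in\F_p$, which gives the APN permutation property. The only real obstacle is the first step, confirming that the compact formula reproduces \eqref{E:reciprocal-map}. There I would take care that $(-1)^z$ is well defined on $\Z_{2p}$ and that the integer representatives behave correctly inside the factor $(p+1)$.
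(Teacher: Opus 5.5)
Your proposal is correct and follows the paper's own proof. Both specialize Theorem~\ref{T:reciprocal-apn} to $a=-1$, using $A_{-1}(z)=(-1)^z$ and $1-4a=1-4/a=5$, and then convert $\chi(-1)=\chi(-3)=\chi(5)=-1$ into congruences modulo $4$, $3$, and $5$ via quadratic reciprocity and the Chinese remainder theorem. Your added remarks are accurate refinements of the paper's brief argument: that $(p+1)Y$ is well defined modulo $2p$, and that only the sufficiency direction is needed.
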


\begin{proof}
The congruences $p\equiv23\pmod{60}$ or $p\equiv47\pmod{60}$ are equivalent to
$\chi(-1)=\chi(-3)=\chi(5)=-1$.  Indeed,
$\chi(-1)=-1$ is equivalent to $p\equiv3\pmod4$, while
$\chi(-3)=-1$ is equivalent to $p\equiv5\pmod6$; together these give
$p\equiv11\pmod{12}$.  Since $5\equiv1\pmod4$, quadratic reciprocity gives
$\chi(5)=\left(\frac{p}{5}\right)$, which equals $-1$ precisely when
$p\equiv2$ or $3\pmod5$.  The Chinese remainder theorem combines these
conditions into $p\equiv47$ or $23\pmod{60}$.  Since
$1-4(-1)=1-4/(-1)=5$, the condition in \cref{T:reciprocal-apn} holds for
$a=-1$.  Moreover, $A_{-1}(z)=(-1)^z$.
\end{proof}

\begin{example}\label{Ex:reciprocal-262}
Let $p=131$, $a=2$, and $c=0$.  The four characters in
\eqref{E:inverse-condition} are $ \chi(2)=\chi(128)=\chi(124)=\chi(130)=-1$.
For a direct reproduction, given $z\in\{0,\ldots,261\}$, put
$E=(1-(-1)^z)/2$, $X=z\bmod131$, $Y=(1+E)X^{129}\bmod131$, and
$R=\rho(X)+E(1-\chi(X)^2)\bmod2$.  Then
$ \mathcal B_{2,0,131}(z)=131R+132Y\pmod{262}$,
where the representative in $\{0,\ldots,261\}$ is used.  This is an APN
permutation of $\Z_{262}$.  Its differential spectrum is represented by
$(A_0,A_1,A_2)=(33541,1300,33541)$; the truth table is given in
Section~\ref{A:TruthTables}.
\end{example}

For the permutations in \eqref{E:reciprocal-map}, Table~\ref{Tab:reciprocal-ddt}
exhausts every
$a\in\cA_p$ and $c\in\F_p$ for six small primes, all congruent to $5$ modulo
$6$.  Duplicates were removed by comparing the truth tables, and each
row has $p|\cA_p|$ distinct permutations.  In the last column,
$m:(A_0,A_1,A_2)$ means that $m$ permutations have the indicated differential spectrum.

\begin{table}[htbp]
\caption{Differential spectra of completed-reciprocal permutations at small orders}
\label{Tab:reciprocal-ddt}
\centering
\scriptsize
\begin{tabularx}{\textwidth}{|c|c|c|r|c|>{\raggedright\arraybackslash}X|}
\hline
$p$&$n$&$|\cA_p|$&Permutations&No. of spectra&Permutations: differential spectrum\\ \hline
$11$&$22$&$2$&$22$&$3$&$10:(141,180,141)$; $10:(145,172,145)$; $2:(181,100,181)$\\ \hline
$17$&$34$&$2$&$34$&$2$&$32:(403,316,403)$; $2:(545,32,545)$\\ \hline
$23$&$46$&$3$&$69$&$3$&$33:(705,660,705)$; $33:(709,652,709)$; $3:(925,220,925)$\\ \hline
$29$&$58$&$4$&$116$&$2$&$112:(1207,892,1207)$; $4:(1625,56,1625)$\\ \hline
$41$&$82$&$4$&$164$&$2$&$160:(2443,1756,2443)$; $4:(3281,80,3281)$\\ \hline
$47$&$94$&$5$&$235$&$3$&$115:(3129,2484,3129)$; $115:(3133,2476,3133)$; $5:(4141,460,4141)$\\ \hline
\end{tabularx}
\end{table}

\begin{proposition}\label{P:no-polynomial}
Assume $\chi(a)=-1$.  The function $\mathcal B_{a,c,p}$ is not a polynomial
function over $\Z_{2p}$.
More generally, it cannot have the form
$P(z)Q(z)^{-1}$ with $P,Q\in\Z_{2p}[X]$ and
$Q(z)\in\Z_{2p}^*$ for every $z\in\Z_{2p}$.
\end{proposition}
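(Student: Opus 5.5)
The plan is to use the Chinese remainder decomposition $\Z_{2p}\cong\Z_2\times\F_p$ and the elementary fact that a rational function with integer-style coefficients is compatible with reduction modulo $p$. If $P,Q\in\Z_{2p}[X]$, then for every $z\in\Z_{2p}$ we have $P(z+p)\equiv P(z)\pmod p$ and $Q(z+p)\equiv Q(z)\pmod p$. This holds because $z+p\equiv z\pmod p$ and reduction modulo $p$ is a ring homomorphism $\Z_{2p}\to\F_p$.

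Suppose $Q(z)\in\Z_{2p}^*$ for every $z$. Then the residue of $Q(z)$ modulo $p$ is nonzero, and the reduction of $Q(z)^{-1}$ modulo $p$ is the inverse in $\F_p$ of $Q(z)\bmod p$. Hence the function $G(z)=P(z)Q(z)^{-1}$ satisfies
\begin{equation*}
 G(z+p)\equiv G(z)\pmod p\qquad\text{for every }z\in\Z_{2p}.
\end{equation*}
Polynomial functions are the special case $Q=1$, so it suffices to prove the general statement.

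Next I compute $\mathcal B_{a,c,p}$ modulo $p$. By \eqref{E:reciprocal-map}, $\mathcal B_{a,c,p}(z)\equiv pR_{c,p}(z)+(p+1)Y_{a,c,p}(z)$, and reducing modulo $p$ gives $\mathcal B_{a,c,p}(z)\equiv Y_{a,c,p}(z)\pmod p$. Since $p$ is odd, the inputs $z$ and $z+p$ have the same field coordinate $x=z\bmod p$ and opposite parities. Choose $z$ even, so that $z+p$ is odd. Then the residues of $\mathcal B_{a,c,p}(z)$ and $\mathcal B_{a,c,p}(z+p)$ are $\iota(x)$ and $a\iota(x+c)$, respectively. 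It therefore suffices to exhibit one $x\in\F_p$ with $\iota(x)\ne a\iota(x+c)$; the corresponding $z$ then violates the displayed congruence.

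For the choice of $x$, split into two cases.
\begin{itemize}
\item If $c\ne0$, take $x=0$. Then $\iota(0)=0$, whereas $a\iota(c)=a/c\ne0$.
\item If $c=0$, take $x=1$. Then $\iota(1)=1$, whereas $a\iota(1)=a\ne1$, because $\chi(a)=-1$ forces $a\ne1$.
\end{itemize}
In either case the residue modulo $p$ of $\mathcal B_{a,c,p}$ changes between $z$ and $z+p$. This is impossible for $P(z)Q(z)^{-1}$, so $\mathcal B_{a,c,p}$ has no such representation, and in particular it is not a polynomial function.

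The only point needing care is the compatibility of the unit inverse $Q(z)^{-1}$ with reduction modulo $p$. I regard this as routine rather than a real obstacle, since reduction is a ring homomorphism and therefore sends units to units and inverses to inverses. The hypothesis $\chi(a)=-1$ is used only to guarantee $a\ne1$.
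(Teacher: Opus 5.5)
Your proof is correct and follows the paper's argument: both show that $P(z)Q(z)^{-1}$ takes the same residue modulo $p$ at the two lifts $z$ and $z+p$ of one residue $x$, one lift even and one odd, while $\mathcal B_{a,c,p}$ takes the residues $\iota(x)$ and $a\iota(x+c)$ at these lifts. The only difference is the choice of witness: the paper takes any $x\notin\{0,-c,c/(a-1)\}$, using $p\ge11$ to ensure such an $x$ exists. Your case split ($x=0$ if $c\ne0$, and $x=1$ if $c=0$) gives an explicit witness and does not need any bound on $p$.
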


\begin{proof}
Let $z,z'\in\Z_{2p}$ satisfy $z\equiv z'\pmod p$.  Polynomial evaluation
commutes with reduction modulo $p$, so $P(z)\equiv P(z')\pmod p$ and
$Q(z)\equiv Q(z')\pmod p$.  Since every $Q(z)$ is a unit modulo $2p$, its
reduction modulo $p$ is nonzero and may be inverted.  Thus every expression
$P(z)Q(z)^{-1}$ of the stated form has the same residue modulo $p$ at $z$ and
$z'$; the polynomial case is obtained by taking $Q=1$.

Since $\chi(a)=-1$, we have $a\ne1$.  Choose
$x\in\F_p\setminus\{0,-c,c/(a-1)\}$; such an $x$ exists because $p\ge11$.
The Chinese remainder theorem gives exactly two lifts of $x$ to
$\Z_{2p}$, one even and one odd.  Their outputs under
$\mathcal B_{a,c,p}$ reduce modulo $p$ to $x^{-1}$ and
$a(x+c)^{-1}$, respectively; both inverses exist by the choice of $x$.
Equality of these two residues would give $x+c=ax$, and hence
$x=c/(a-1)$, again excluded.  The two lifts therefore have different output
residues modulo $p$, although they have the same input residue modulo $p$.
This contradicts the dependence established in the first paragraph for every
function of the stated form.
\end{proof}

The completed reciprocal $x^{p-2}$ is itself an APN power function on prime
fields with $p\equiv2\pmod3$~\cite{BudaghyanPalDCC2025}.  After a nonzero
input difference is normalized, the only case involving a zero denominator
leads to a quadratic equation with discriminant $-3$, which is then a
nonsquare.  In \eqref{E:reciprocal-map}, the formula for the completed
reciprocal depends on the input parity, and the output parity is determined by
the quadratic character.  By \cref{P:no-polynomial}, the resulting permutation is not a
polynomial function on the residue class ring.

\subsection{Orders different from those of the standard classes}

\begin{theorem}\label{T:reciprocal-orders}
The arithmetic progression $p\equiv11027\pmod{44100}$ contains infinitely
many primes. For every such prime $p$, put $n=2p$. Then none of the integers
$n-1,n+1,n+2,n+3,n+4$ is a prime power. Moreover, for every $c\in\F_p$,
the permutation $\mathcal B_{-1,c,p}$ is APN. Consequently, $n$ is neither of the
form $r+1$ nor of any of the forms $r-j$ with $1\le j\le4$, where $r$ is a
prime power. Thus these orders lie outside the order forms of the standard
finite-field and Singer classes listed in
\cref{Tab:intro-comparison}.
\end{theorem}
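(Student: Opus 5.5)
The plan is to turn the single congruence $p\equiv11027\pmod{44100}$ into congruences modulo the prime-power factors of $44100=2^2\cdot3^2\cdot5^2\cdot7^2$, and then argue as in the proofs of \cref{T:singer-separated,T:ternary-separated}.

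\textbf{Step 1: residues of $p$.} Since $11025=9\cdot1225=25\cdot441=49\cdot225$ and $11024=4\cdot2756$, reduction gives
\begin{equation*}
p\equiv3\pmod4,\qquad p\equiv2\pmod9,\qquad p\equiv2\pmod{25},\qquad p\equiv2\pmod{49}.
\end{equation*}
Thus $11027$ is odd and prime to $3$, $5$ and $7$. Hence $\gcd(11027,44100)=1$, and Dirichlet's theorem gives infinitely many primes in the progression. Every such prime is at least $11027$, so in particular $p\ge11$.

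\textbf{Step 2: the five neighbours of $n=2p$.} Using the residues above, I will show that each neighbour has a prime divisor of valuation one but is not a power of that prime.
\begin{itemize}
\item $n-1=2p-1\equiv3\pmod9$, so $v_3(n-1)=1$. Since $n-1>3$, it is not a prime power, by the observation in the preliminaries.
\item $n+1=2p+1\equiv5\pmod{25}$, so $v_5(n+1)=1$ and $n+1>5$.
\item $n+2=2(p+1)$ is even. From $p\equiv2\pmod9$ we get $p+1\equiv0\pmod3$, so $3\mid n+2$. Hence $n+2$ has the two prime divisors $2$ and $3$.
\item $n+3=2p+3\equiv7\pmod{49}$, so $v_7(n+3)=1$ and $n+3>7$.
\item $n+4=2(p+2)$ with $p+2$ odd, so $v_2(n+4)=1$ and $n+4>2$.
\end{itemize}
In each case the integer is not a prime power.

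\textbf{Step 3: the APN property.} For this I will invoke \cref{C:minus-one}. Combining $p\equiv3\pmod4$ with $p\equiv2\pmod3$ (which follows from $p\equiv2\pmod9$) gives $p\equiv11\pmod{12}$. Together with $p\equiv2\pmod5$, the Chinese remainder theorem gives $p\equiv47\pmod{60}$. So \cref{C:minus-one} applies, and $\mathcal B_{-1,c,p}$ is an APN permutation of $\Z_{2p}$ for every $c\in\F_p$.

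\textbf{Step 4: the order forms.} If $n=r+1$ or $n=r-j$ with $1\le j\le4$ and $r$ a prime power, then $n-1$ or $n+j$ would be a prime power, contradicting Step 2. This places the orders outside the forms listed in \cref{Tab:intro-comparison}.

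No step is a real obstacle. The only care needed is the bookkeeping of residues and checking that each cofactor exceeds one. For $n+2$ the $3$-adic valuation need not be one, so there I use the two distinct prime divisors $2$ and $3$ instead of the valuation argument.
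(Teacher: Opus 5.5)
Your proof is correct and takes essentially the same route as the paper's: reduce the congruence modulo the factors of $44100$, invoke Dirichlet's theorem, rule out prime powers among $n-1,n+1,n+2,n+3,n+4$ by the valuation and two-prime-divisor arguments, and obtain the APN property from \cref{C:minus-one} via $p\equiv47\pmod{60}$. The paper reads $p\equiv47\pmod{60}$ off directly from $44100=\operatorname{lcm}(60,9,25,49)$, whereas you assemble it by the Chinese remainder theorem. One harmless slip is your closing remark: $p\equiv2\pmod9$ gives $p+1\equiv3\pmod9$, so in fact $v_3(n+2)=1$ and the valuation argument would also have worked for $n+2$.
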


\begin{proof}
Since $44100=\operatorname{lcm}(60,9,25,49)$ and
$\gcd(11027,44100)=1$, Dirichlet's theorem guarantees infinitely many primes
in this arithmetic progression. Reducing the defining congruence modulo the
four indicated divisors gives $p\equiv47\pmod{60}$,
$p\equiv2\pmod9$, $p\equiv2\pmod{25}$, and
$p\equiv2\pmod{49}$.

The congruence $p\equiv47\pmod{60}$ implies that
$\mathcal B_{-1,c,p}$ is APN for every $c\in\F_p$ by
\cref{C:minus-one}. The other three congruences give
$n-1=2p-1\equiv3\pmod9$, $n+1=2p+1\equiv5\pmod{25}$, and
$n+3=2p+3\equiv7\pmod{49}$. Hence $v_3(n-1)=1$,
$v_5(n+1)=1$, and $v_7(n+3)=1$. Each of $n-1,n+1,n+3$ is larger than
the corresponding prime, so each has another prime divisor and therefore
cannot be a prime power.
Furthermore, $p\equiv47\pmod{60}$ implies $3\mid(p+1)$. Thus
$n+2=2(p+1)$ has the two distinct prime divisors $2$ and $3$. Finally,
$p$ is odd, so $p+2>1$ is odd and has an odd prime divisor. Together with
the factor $2$, this shows that $n+4=2(p+2)$ also has at least two distinct
prime divisors. Therefore, none of $n-1,n+1,n+2,n+3,n+4$ is a prime power,
which proves the result.
\end{proof}

\begin{example}\label{Ex:direct-separated}
The first term $p=11027$ in the progression of
\cref{T:reciprocal-orders} is prime. Take $a=-1$ and $c=0$ in
\cref{Con:reciprocal}. More explicitly, for
$z\in\{0,\ldots,22053\}$, set $E=(1-(-1)^z)/2$ and
$X=z\bmod11027$, and define $Y$ and $R$ by
$Y\equiv(1-2E)X^{11025}\pmod{11027}$ and
$R\equiv\rho(X)+E(1-\chi(X)^2)\pmod2$. Then the function defined by
$F(z)\equiv11027R+11028Y\pmod{22054}$ is the APN permutation
$\mathcal B_{-1,0,11027}$ supplied by \cref{C:minus-one}.
For $n=22054$, the integers $n-1,n+1,n+2,n+3,n+4$ have the
factorizations listed in \cref{Tab:direct-separated-factorizations}.

\begin{table}[htbp]
\centering
\caption{Factorizations of the neighboring integers for $n=22054$}
\label{Tab:direct-separated-factorizations}
\footnotesize
\begin{tabular}{|c|c|}
\hline
\text{Integer} & \text{Factorization} \\
\hline
$n-1$ & $3\cdot7351$ \\
\hline
$n+1$ & $5\cdot11\cdot401$ \\
\hline
$n+2$ & $2^3\cdot3\cdot919$ \\
\hline
$n+3$ & $7\cdot23\cdot137$ \\
\hline
$n+4$ & $2\cdot41\cdot269$ \\
\hline
\end{tabular}
\end{table}
\end{example}

\section{Comparison with Known Classes}\label{S:Comparison}

Table~\ref{Tab:intro-comparison} summarizes the orders of the known and new
classes.  The permutations in Section~\ref{S:Singer} use fixed points of
projectivities of a line.  For the binomials in Section~\ref{S:Binomial},
for each scalar in $\F_q^*$, at most one projective point satisfies the
corresponding equation.  The permutations in
Section~\ref{S:Reciprocal} distinguish the roots of the resulting quadratic
equations by their quadratic characters.  The parameters are, respectively, elements
$M\in\PGL_2(q)$ satisfying $MJ\ne JM$, binomials satisfying the permutation
and greatest-common-divisor conditions in
\cref{Con:binomial,T:exponent-condition}, and triples $(p,a,c)$ satisfying
\eqref{E:inverse-condition}.  The first two constructions arise from linear
and projective actions over finite fields, whereas the third is described by
elementary congruence and character data.  More importantly, the proofs use
three different bounds:
at most two fixed points; at most one projective point for each scalar in
$\F_q^*$; and at most two roots of the equations involving reciprocals after
their quadratic characters are taken into account.  Consequently, none of the
three proofs is a specialization of another proof in the paper; this statement
does not assert that the resulting families are disjoint at a common order.

\begin{corollary}\label{C:order-separation}
Each of the three constructions contains an infinite subfamily of composite
orders $n$ that do not have the forms $r-1,r-2,r-3,r-4$, where $r$ is a
prime power.  These forms
occur in the Welch--Costas, Panario--Sakzad--Stevens--Wang, and standard
Golomb classes.  The constructions
in Sections~\ref{S:Binomial} and \ref{S:Reciprocal} contain infinitely many
orders not of the form $r+1$ occurring in the Singer construction, while the
subfamily in \cref{T:singer-separated} contains no order of the form $2p$.
\end{corollary}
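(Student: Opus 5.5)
The plan is to assemble the corollary from the three order-separation theorems already proved, together with short compositeness checks.

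For the Singer construction, I will invoke \cref{T:singer-separated}. It gives infinitely many primes $q$ in the progression \eqref{E:singer-progression}. For each such prime, $n=q+1$ satisfies $v_2(n)=2$ and $v_3(n)=1$, so $12\mid n$ and $n$ is composite. The theorem also states that none of $n+1,\dots,n+4$ is a prime power, and that $n\ne 2p$ for every prime $p$. This single subfamily therefore yields both the Singer part of the first sentence and the final clause of the corollary. Distinct primes $q$ give distinct orders, so the subfamily is infinite.

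For the binomial construction, I will use \cref{T:ternary-separated} with $d\equiv14\pmod{4620}$. The orders $n=(3^d-1)/2$ are strictly increasing in $d$, so the subfamily is infinite. The theorem shows that none of $n-1,n+1,\dots,n+4$ is a prime power, and the case $n-1$ is exactly the exclusion of the form $r+1$. Compositeness follows from $v_2(n)=2$ (established in that proof) and $n>4$.

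For the reciprocal construction, I will use \cref{T:reciprocal-orders}. It supplies infinitely many primes $p\equiv11027\pmod{44100}$ for which $\mathcal B_{-1,c,p}$ is APN on $\Z_{2p}$, and for which none of $n-1,n+1,\dots,n+4$ is a prime power. Here $n=2p$ is composite trivially, and the case $n-1$ again excludes the form $r+1$.

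Finally, I will restate the observation made before \cref{T:singer-separated}. An order of the form $r-j$ with $1\le j\le4$ requires $n+j$ to be a prime power, and an order of the form $r+1$ requires $n-1$ to be a prime power. Excluding these prime powers therefore excludes each order form.

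The only point requiring care is bookkeeping. Each subfamily must be infinite, which follows from the strict monotonicity of $n$ in the parameter. The compositeness of $n$, which the three theorems do not all state explicitly, has to be extracted from the valuation facts already established. No new estimate is needed.
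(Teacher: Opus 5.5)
Your proposal is correct and follows essentially the same route as the paper: it assembles the corollary from \cref{T:singer-separated,T:ternary-separated,T:reciprocal-orders}, uses the observation that the forms $r-j$ and $r+1$ require $n+j$ and $n-1$ to be prime powers, and adds compositeness checks. The only deviation is for the ternary orders, where you get compositeness from $v_2(n)=2$ and $n>4$, whereas the paper uses the factorization $(3^d-1)/2=(3^{d/2}-1)(3^{d/2}+1)/2$ for even $d$; both arguments are valid.
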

\begin{proof}
The Welch--Costas and Panario--Sakzad--Stevens--Wang orders have the form
$r-1$, and the three standard Golomb orders have the forms $r-2,r-3,r-4$,
where $r$ is a prime power. Thus an order $n$ belongs to one of these
classes only if one of $n+1,n+2,n+3,n+4$ is a prime power.
\cref{T:singer-separated,T:ternary-separated,T:reciprocal-orders} each gives
an infinite subfamily for which none of these four integers is a prime
power.

The orders in \cref{T:singer-separated} are divisible by both $3$ and $4$.
In \cref{T:ternary-separated}, the integer $d\equiv14\pmod{4620}$ is even,
and $(3^d-1)/2=(3^{d/2}-1)(3^{d/2}+1)/2$ is composite. The orders in
\cref{T:reciprocal-orders} have the form $2p$, where $p$ is an odd prime.
Hence all three subfamilies consist of composite orders.

The Singer construction has order $r+1$, so it requires $n-1=r$ to be a
prime power; \cref{T:ternary-separated,T:reciprocal-orders} also exclude
this possibility. Finally, \cref{T:singer-separated} states directly that
its orders are not of the form $2p$. These are exactly the assertions of
the corollary.
\end{proof}

\section{Conclusion}\label{S:Conclusion}

In this paper, we presented three infinite classes of APN permutations on
$\Z_n$. The Singer-cycle construction produced APN permutations on
$\Z_{q+1}$ for every odd prime power $q$. For a fixed generator $R$ of a
Singer group and a fixed base point, the $q(q^2-1)-2(q+1)$ admissible
projectivities induced distinct permutations. The binomial construction
gave differential uniformity at most $q-1$ under the exponent condition
$\gcd(r-t,q^d-1)=q-1$. For $q=3$, invertible ternary linearized polynomials
yielded an explicit infinite APN subfamily on $\Z_{(3^d-1)/2}$.
The completed-reciprocal construction produced a parametrized family
$\mathcal B_{a,c,p}$ on $\Z_{2p}$. The permutation $\mathcal B_{a,c,p}$ was APN
permutation if and only if
$\chi(a)=\chi(-3)=\chi(1-4a)=\chi(1-4/a)=-1$. For every prime $p>5$ with
$p\equiv5\pmod6$, the associated character sum gave the exact number of APN
permutations in this family. The three APN proofs relied, respectively, on
fixed points of projectivities, numbers of projective solutions, and
quadratic characters. The small-order computations confirmed the
theoretical results and determined the differential spectra in the stated
parameter ranges.
Each construction also contained an infinite subfamily of composite orders
outside the standard forms $r-1,r-2,r-3,r-4$, where $r$ is a prime power.
These are the order forms associated with the Welch--Costas, standard Golomb,
and Panario--Sakzad--Stevens--Wang classes. To the best of our knowledge,
the three constructions presented here are the first infinite classes
reported since the Panario--Sakzad--Stevens--Wang class was introduced in
2011 that provide APN permutations on $\Z_n$ for infinitely many composite
orders $n$ outside these standard forms.

\section{Appendix}\label{S:Appendix}

\subsection{Truth Tables of Representative APN Permutations}
\label{A:TruthTables}

This appendix gives one moderate-order representative from each construction,
together with its truth table $[F(0),F(1),\ldots,F(n-1)]$ and differential
spectrum $(A_0,A_1,\ldots,A_n)$, where $A_j$ is defined in
\eqref{E:spectrum}. All examples were generated and verified by Magma V2.28-3.

\lstset{basicstyle=\ttfamily\scriptsize,breaklines=true,
breakatwhitespace=false,columns=fullflexible,frame=single,
literate={,}{{,\allowbreak}}1}

\subsubsection{Example induced by a Singer cycle on \texorpdfstring{$\Z_{258}$}{Z258}}
Let $f(X)=X^2-6X+3\in\F_{257}[X]$, let
$K=\F_{257}[X]/(f(X))$, and let $\alpha=X+(f(X))$.  The discriminant $24$ is
a nonsquare in $\F_{257}$, so $f$ is irreducible.  Moreover,
$\alpha^{33024}=-1$, $\alpha^{22016}=179+240\alpha$, and
$\alpha^{1536}=125+176\alpha$.  Since the prime divisors of
$|K^*|=66048$ are $2,3,43$, the element $\alpha$ is primitive.

Represent elements of $K$ by column vectors in the ordered basis
$(1,\alpha)$, put $e_0=(1,0)^{\mathsf T}$, and take
\begin{equation*}
 R=\begin{pmatrix}0&254\\1&6\end{pmatrix},~~ M=\begin{pmatrix}1&1\\0&1\end{pmatrix},~~\text{and}~~ P_0=\infty.
\end{equation*}
The relation $\alpha^2=6\alpha+254$ shows directly that $R$ is the matrix of
multiplication by $\alpha$.  For $0\le i\le257$, put
$P_i=[R^ie_0]=[\alpha^i]$.  These are all the points of
$\mathbb P^1(\F_{257})$.  For each $i$, define $F(i)$ as the unique $j$ such
that $MR^ie_0=\lambda R^je_0$ for some $\lambda\in\F_{257}^*$.  Thus the
specified matrices determine the following truth table without any
unspecified field choices.
The induced permutation has $\du=2$, and its differential spectrum is
$(A_0,A_1,A_2)=(33025,256,33025)$.
\begin{lstlisting}
F = [0,143,232,83,145,21,208,215,26,116,198,237,203,129,2,200,213,239,5,241,194,247,55,114,219,204,256,195,172,25,141,212,17,121,18,94,170,67,171,234,47,163,16,251,190,242,227,74,133,199,252,205,183,207,233,246,186,59,243,209,248,72,162,231,238,167,43,105,214,45,188,60,202,112,10,206,56,149,79,73,4,124,106,58,118,51,230,220,222,62,102,193,97,153,52,217,196,150,123,85,113,111,33,32,44,165,84,88,161,181,127,146,147,254,34,257,87,228,122,54,135,15,24,138,49,210,6,125,13,245,1,148,19,131,177,160,140,225,92,80,174,90,249,9,235,158,185,157,93,8,69,99,176,191,36,29,168,46,37,107,71,166,68,78,223,110,120,22,117,81,151,142,20,159,152,255,12,89,119,180,95,31,3,30,211,179,197,98,14,108,96,221,48,28,11,57,169,40,187,201,175,63,182,236,139,50,164,173,53,134,66,218,101,189,154,192,41,42,61,7,27,100,104,23,144,156,155,77,75,103,65,38,250,229,136,35,91,253,86,126,224,226,216,137,70,130,82,64,184,115,109,39,132,240,178,76,244,128].
\end{lstlisting}

\subsubsection{Binomial example on
\texorpdfstring{$\Z_{364}$}{Z364}}
Let $f(X)=X^6+X^5+X^3-1\in\F_3[X]$, $K=\F_3[X]/(f(X))$, and $\alpha=X+(f(X))$,
where $f$ is primitive (and hence irreducible); thus $\alpha$ has order $3^6-1=728$.
Take $d=6$, $s=1$, $c=-\alpha$, and $k=1$, so
$H(x)=x^3-\alpha x$.  Use column vectors in the ordered power basis
$(1,\alpha,\ldots,\alpha^5)$ and put $e_0=(1,0,0,0,0,0)^{\mathsf T}$.
Multiplication by $\alpha$ and the linearized transformation $L=H$ are represented by
\begin{equation*}
 R=\begin{pmatrix}0&0&0&0&0&1\\1&0&0&0&0&0\\0&1&0&0&0&0\\0&0&1&0&0&2\\0&0&0&1&0&0\\0&0&0&0&1&2\end{pmatrix}
~~\text{and}~~
 L=\begin{pmatrix}1&0&1&1&1&1\\2&0&0&1&2&1\\0&2&0&2&0&1\\0&1&1&0&0&0\\0&0&0&1&2&1\\0&0&2&0&0&1\end{pmatrix}.
\end{equation*}
For $0\le i\le363$, define $F(i)$ as the unique $j$ satisfying
$LR^ie_0\in\{R^je_0,-R^je_0\}$.  Since $R^{364}=-I$, the vectors
$R^ie_0$, up to sign, give all the points of $K^*/\F_3^*$.  This rule
generates the following truth table.
The induced permutation has $\du=2$, and its differential spectrum is
$(A_0,A_1,A_2)=(33124,65884,33124)$.
\begin{lstlisting}
F = [259,261,52,139,214,153,201,33,50,141,325,275,151,338,92,67,303,236,190,286,96,316,22,147,152,29,56,93,314,244,60,271,94,87,42,86,197,331,283,136,175,273,361,114,198,119,229,178,216,298,341,174,323,203,248,134,127,54,27,285,81,319,217,252,112,63,208,282,74,125,327,89,329,122,84,181,296,165,301,156,276,64,145,211,180,11,1,142,253,177,237,98,82,297,71,279,362,326,258,194,347,123,70,65,255,113,131,224,66,359,262,108,103,118,263,79,41,4,36,158,182,44,88,315,12,352,235,102,339,251,132,227,334,15,354,302,39,320,343,230,167,10,350,281,95,110,163,221,53,292,126,328,155,146,231,238,144,166,242,278,270,13,137,5,35,45,6,14,199,239,159,140,353,78,299,91,124,3,202,240,274,311,226,228,317,284,254,220,25,150,121,333,116,31,186,209,293,257,76,72,115,109,83,219,99,360,8,340,268,250,348,345,264,277,90,256,62,191,363,309,260,249,75,148,176,337,40,157,138,225,128,184,213,172,69,342,101,193,324,97,218,246,189,330,290,68,48,294,266,307,349,173,32,21,30,188,289,0,162,73,59,205,19,28,85,313,164,232,304,344,265,61,291,80,100,243,187,267,160,23,37,210,245,183,106,322,206,355,223,358,247,154,196,43,171,49,215,168,16,310,169,222,2,335,77,207,51,161,192,269,280,34,357,117,336,143,104,26,356,47,305,24,308,195,111,17,346,133,185,55,312,272,321,332,149,306,130,204,351,318,295,58,57,300,234,18,212,120,170,241,9,38,288,105,233,287,107,20,7,179,135,200,129,46].
\end{lstlisting}

\subsubsection{Example from completed reciprocals on
\texorpdfstring{$\Z_{262}$}{Z262}}
Take $p=131$, $a=2$, and $c=0$. For each
$z\in\{0,\ldots,261\}$, set $E=(1-(-1)^z)/2$,
$X=z\bmod131$, and $Y=(1+E)X^{129}\bmod131$, and let
$R\in\{0,1\}$ be determined by
$R\equiv\rho(X)+E(1-\chi(X)^2)\pmod2$. The corresponding truth-table entry
is the unique representative in $\{0,\ldots,261\}$ satisfying
$F(z)\equiv131R+132Y\pmod{262}$. Here $X^{129}=0$ when $X=0$, as required
by the completed reciprocal. The four character arguments are
$2,128,124,130$, and direct exponentiation in $\F_{131}$ gives
$\chi(2)=\chi(128)=\chi(124)=\chi(130)=-1$. The resulting permutation has
$\du=2$ and differential spectrum
$(A_0,A_1,A_2)=(33541,1300,33541)$.

\begin{lstlisting}
F = [0,133,66,219,33,79,22,19,82,73,118,155,11,111,234,201,41,108,182,138,59,181,6,114,202,173,126,199,117,244,214,186,86,139,27,161,91,170,231,37,226,163,78,259,3,67,225,184,101,115,76,36,63,47,148,31,124,46,61,171,107,247,243,235,43,127,2,176,210,38,204,48,242,140,239,145,50,97,42,136,113,241,8,60,39,74,32,256,198,237,246,203,178,62,177,80,116,208,258,221,169,227,9,28,228,141,110,251,205,119,106,144,193,29,77,180,96,187,10,240,250,157,160,229,56,87,26,196,218,261,130,131,1,132,175,197,105,44,75,164,233,236,143,153,121,206,35,213,54,102,200,249,25,12,188,142,21,252,165,103,122,166,224,172,135,185,15,51,216,69,215,190,147,156,195,137,99,57,92,71,123,152,18,257,89,34,81,248,154,253,151,83,189,93,183,217,129,4,88,158,150,146,24,222,70,85,7,100,245,84,68,95,55,16,30,209,168,64,128,134,53,230,167,94,162,223,40,232,104,254,45,207,179,149,14,194,5,220,191,17,125,212,72,255,211,23,90,192,159,20,120,238,13,58,49,112,109,52,98,174,65,260].
\end{lstlisting}

\end{document}